\numberwithin{equation}{section}
\newtheorem{theorem}{Theorem}[section]
\newtheorem{proposition}[theorem]{Proposition}
\newtheorem{corollary}[theorem]{Corollary}
\newtheorem{lemma}[theorem]{Lemma}
\newtheorem{problem}[theorem]{Problem}
\newtheorem{example}[theorem]{Example}
\definecolor{2purple}{RGB}{204,102,255}
\definecolor{3green}{RGB}{0,204,0}
\newtheorem{defn}[theorem]{Definition}
\theoremstyle{definition}
\newcommand{\equicc}[4][1 cm]{
\draw (#3,#4) circle (#1);
\pgfmathparse{#1/1 cm+0.25};
\edef\oc{\pgfmathresult cm};
  \foreach \i in {1,...,#2} {
    \coordinate (N\i) at ($(#3,#4)+ (\i*360/#2:#1)$);
    \fill[black] (N\i) circle (0.05 cm);
    \draw ($(#3,#4)+ (\i*360/#2:\oc)$) node{$\i$};
  }
  \foreach \i in {2,...,#2} {
    \pgfmathparse{\i-1}
    \edef\j{\pgfmathresult}
  }
}
\newcommand{\inv}{{\mathrm {inv}}}
\newcommand{\symm}{{\mathfrak{S}}}
\begin{document}

\title[A combinatorial basis for the fermionic diagonal coinvariant ring]
{A combinatorial basis for the fermionic diagonal coinvariant ring}

\author{Jesse Kim}
\address
{Department of Mathematics \newline \indent
University of California, San Diego \newline \indent
La Jolla, CA, 92093-0112, USA}
\email{jvkim@ucsd.edu}

\begin{abstract}
Let $\Theta_n = (\theta_1, \dots, \theta_n)$ and $\Xi_n = (\xi_1, \dots, \xi_n)$ be two lists of $n$
 variables and consider the diagonal action of $\symm_n$ on the exterior algebra 
$\wedge \{ \Theta_n, \Xi_n \}$ generated by these variables. Jongwon Kim and Rhoades defined
and studied
the {\em fermionic diagonal coinvariant ring}  $FDR_n$ obtained from 
$\wedge \{ \Theta_n, \Xi_n \}$ by modding out by the $\symm_n$-invariants with vanishing constant term.
In joint work with Rhoades we gave a basis for the maximal degree components of this ring where the action of $\symm_n$ could be interpreted combinatorially via noncrossing set partitions. This paper will do similarly for the entire ring, although the combinatorial interpretation will be limited to the action of $\symm_{n-1} \subset \symm_n$. The basis will be indexed by a certain class of noncrossing partitions. 
\end{abstract}

\maketitle

\section{Introduction}
\label{Introduction}
This paper involves an algebraically defined $\mathfrak{S}_n$-module, and is concerned with modelling the $\mathfrak{S}_n$ action on this module via combinatorially defined objects. In particular, we will give a basis indexed by a certain type of noncrossing set partition for which the $\mathfrak{S}_n$ action has a nice combinatorial interpretation.

The module in question was introduced by Jongwon Kim and Rhoades \cite{KR}, and is defined as follows. Let $\Theta_n = (\theta_1, \dots, \theta_n)$ and $\Xi_n = (\xi_1, \dots, \xi_n)$ be two sets of $n$ anticommuting variables, and let
\begin{equation}
\wedge \{\Theta_n, \Xi_n \} := \wedge \{\theta_1, \dots, \theta_n , \xi_1, \dots, \xi_n \} 
\end{equation}
be the exterior algebra generated by these symbols over $\mathbb{C}$. The symmetric group $\mathfrak{S}_n$ acts on this exterior algebra via a diagonal action given by
\begin{equation}
w \cdot \theta_i := \theta_{w(i)} \qquad w \cdot \xi'_i := \xi'_{w(i)}.
\end{equation}
for any permutation $w\in \mathfrak{S}_n$ and $1 \leq i \leq n$. Let $\wedge \{\Theta_n, \Xi_n \}^{\mathfrak{S}_n}_+$ denote the subspace of $\mathfrak{S}_n$-invariants with vanishing constant term. Then the fermionic diagonal coinvariant ring is defined as 
\begin{equation}
FDR_n := \wedge \{\Theta_n, \Xi_n \}/\langle\wedge \{\Theta_n, \Xi_n \}^{\mathfrak{S}_n}_+ \rangle.
\end{equation}

The ring $FDR_n$ is a variant of the Garsia-Haiman diagonal coinvariant ring \cite{Haiman}, which is defined analogously but with the anticommuting variables replaced with commuting ones. Several other variants involving more sets of variables or mixtures of anticommuting and commuting variables have been studied by other authors \cite{Bergeron, BRT, DIW, KR, OZ, PRR, RW, RW2, Swanson, SW, ZabrockiDelta, ZabrockiFermion}.

The ring $\wedge \{\Theta_n, \Xi_n \}$ has a bigrading given by
\begin{equation}
(\wedge \{\Theta_n, \Xi_n \})_{i,j} := \wedge^i \{\theta_1, \dots, \theta_n \} \otimes \wedge^j \{\xi_1, \dots, \xi_n \} .
\end{equation}
The invariant ideal $\langle\wedge \{\Theta_n, \Xi_n \}^{\mathfrak{S}_n}_+ \rangle$ is homogeneous, so $FDR_n$ inherits the bigrading. In \cite{KR}, Kim and Rhoades calculated the frobenius image of $FDR_n$ to be given by
\begin{equation}
\textrm{Frob}(FDR_n)_{i,j}  = s_{(n-i, 1^i)} * s_{(n-j, 1^j)} - s_{(n-i-1, 1^{i+1})} * s_{(n-j-1, 1^{j+1})}
\end{equation}
where $*$ denotes the Kronecker product of Schur functions. They remark that in the case when $i+j = n-1$, the above shows that the dimension of $(FDR_n)_{n-k,k-1}$ is given by the Narayana number $\textrm{Nar}(n,k)$. Narayana numbers count noncrossing set partitions of $[n]$ into $k$ blocks, and in joint work with Rhoades \cite{me} we gave a combinatorial basis of $(FDR_n)_{n-k,k-1}$ indexed by set partitions for which the $\mathfrak{S}_n$-action was given by a skein action on noncrossing partitions first described by Rhoades in \cite{Rhoades}.

In this paper we will give a similar result for all bidegrees, although our results will not give a combinatorial description for the full $\mathfrak{S}_n$-action. Instead, we will focus on the subgroup  of $\mathfrak{S}_n$ consisting of permutations which leave $n$ fixed (which we will abusively refer to as $\mathfrak{S}_{n-1}$). We will define a basis of $(FDR_n)_{i,j}$ indexed by a certain class of noncrossing set partitions defined in Section 3 for which the action of $\mathfrak{S}_{n-1}$ can be described via combinatorial manipulations of the indexing partitions and use this basis to give an expression for the Frobenius image
\begin{equation}
\textrm{Frob}(\textrm{Res}^{\mathfrak{S}_n}_{\mathfrak{S}_{n-1}}(FDR_n)_{i,j}).
\end{equation}

The rest of the paper is organized as follows. Section 2 will give relevant background information on set partitions, exterior algebras, and $\mathfrak{S}_n$ representation theory. Section 3 will describe an action of  $\symm_{n-1}$ on certain set partitions and map this action into $FDR_n$. Section 4 will show that a restriction of this map is an isomorphism and use it to obtain a combinatorial basis of $FDR_n$. Section 5 will use the basis developed to calculate the bigraded $\symm_n$-structure of $FDR_n$. Section 6 will connect this basis to a cyclic sieving result of Thiel and address some avenues of further inquiry.

\section{Background}
\label{Background}
\subsection{Combinatorics}
A \textit{noncrossing set partition} of $[n]$ is a set partition of $[n]$ in which for any $1 \leq a<b<c<d \leq n$ if $a$ and $c$ are in the same block, and $b$ and $d$ are in the same block, then $a,b,c,d$ are all in the same block. 

An \textit{integer partition} $\lambda \vdash n$ of length $k$ is a sequence of integers $(\lambda_1, \lambda_2, \dots, \lambda_k)$ where $\lambda_1 + \cdots + \lambda_k = n$ and $\lambda_1 \geq \lambda_2 \geq \cdots \geq \lambda_k \geq 1$. \textit{Dominance order}, denoted by $\mu \preceq \lambda$, is a partial order on set partitions defined by $\mu \preceq \lambda$ if and only if $\mu_1 + \cdots + \mu_i \leq \lambda_1 + \cdots + \lambda_i$ for all $i$, taking $\mu_i$ or $\lambda_i$ to be 0 whenever $i$ exceeds the length of $\mu$ or $\lambda$ respectively. The conjugate of an integer partition $\lambda$ denoted $\lambda'$. 

\subsection{Exterior Algebras}
As in the introduction, we will use $\wedge \{\Theta_n, \Xi_n\}$ to denote the exterior algebra generated by the $2n$ symbols $\theta_1, \dots, \theta_n, \xi_1, \dots, \xi_n$. There is an isomorphism of graded vector spaces (see e.g. \cite{KR})
\begin{equation}
FDR_n \cong \wedge \{\theta_1, \dots, \theta_{n-1}, \xi'_1 , \dots, \xi'_{n-1}\} / \langle \theta_1\xi'_1 + \cdots + \theta_{n-1}\xi'_{n-1} \rangle
\end{equation}
given by
\begin{align*}
&\theta_i   \rightarrow \theta_i & 1\leq i \leq n-1\\
&\theta_n \rightarrow -(\theta_1 + \cdots + \theta_{n-1})&\\
&\xi_i \rightarrow \xi'_i - \frac{1}{n}(\xi'_1 + \cdots + \xi'_{n-1}) & 1 \leq i \leq n-1 \\
&\xi_n \rightarrow - \frac{1}{n}(\xi'_1 + \cdots + \xi'_{n-1}) 
\end{align*}
As this paper will focus on the action of $\mathfrak{S}_{n-1}$, we will extensively use this alternate formulation of $FDR_n$, and use $\wedge \{\Theta_{n-1}, \Xi'_{n-1}\}$ to denote $\wedge \{\theta_1, \dots, \theta_{n-1}, \xi'_1 , \dots, \xi'_{n-1}\}$. The ring $\wedge \{\Theta_{n-1}, \Xi'_{n-1}\}$ inherits the action of $\mathfrak{S}_n$ and $\mathfrak{S}_{n-1}$ from $FDR_n$, and the action of $\symm_{n-1}$ simply permutes indices of variables. 

Given subsets $S, T \subseteq [n-1]$, let $\theta_S, \xi'_T$ with $S = \{s_1 < \cdots < s_a\}$, $T = \{t_1 < \cdots < t_b\}$ denote the monomial
\begin{equation}
\theta_{s_1} \cdots \theta_{s_a}\cdot \xi'_{t_1} \cdots \xi'_{t_b}
\end{equation}
The set $\{\theta_S \cdot \xi'_T : S,T \subseteq [n-1]\}$ is a basis of $\wedge \{\Theta_{n-1}, \Xi'_{n-1}\}$. Define an inner product $\langle -,-\rangle$ on $\wedge \{\Theta_{n-1}, \Xi'_{n-1}\}$ by declaring this basis to be orthogonal.

An exterior algebra $\wedge \{ \omega_1, \dots, \omega_n\}$ acts on itself via {\em exterior differentiaiton}, denoted by $\odot$. The action $\odot: \wedge \{ \omega_1, \dots, \omega_n\} \times \wedge \{ \omega_1, \dots, \omega_n\} \rightarrow \wedge \{ \omega_1, \dots, \omega_n\}$ is defined by
\[
\omega_i \odot (\omega_{s_1}\cdots \omega_{s_k}) = \begin{cases} \omega_{s_1}\cdots\widehat{\omega_{s_j}}\cdots \omega_{s_k} & i = s_j \\ 0 & i \neq s_j \textrm{ for all } 1\leq j \leq k\end{cases}.
\]
\subsection{$\mathfrak{S}_n$-Representation Theory}
Irreducible representations of $\mathfrak{S}_n$ are in one-to-one correspondence with partitions $\lambda \vdash n$. Given $\lambda \vdash n$ let $S^\lambda$ denote the corresponding $\mathfrak{S}_n$-irreducible. Any finite dimensional $\mathfrak{S}_n$-module $V$ can be decomposed uniquely as $V \cong \bigoplus_{\lambda \vdash n} c_\lambda S_\lambda$ for some multiplicities $c_\lambda$. The \textit{Frobenius image} of $V$ is the symmetric function given by 
\begin{equation}
\textrm{Frob } V := \sum_{\lambda \vdash n} c_\lambda s_\lambda 
\end{equation}
where $s_\lambda$ is the Schur function corresponding to $\lambda$. 

If $V$ is an $\mathfrak{S}_n$-module and $W$ is an $\mathfrak{S}_m$-module, their \textit{induction product} $V\circ W$ is given by
\begin{equation}
V \circ W := \textrm{Ind}_{\mathfrak{S}_n \times \mathfrak{S}_m}^{\mathfrak{S}_{n+m}} (V \otimes W)
\end{equation}
where the action of $\mathfrak{S}_m \times \mathfrak{S}_n$ on $V\otimes W$ is given by $(\sigma, \sigma') \cdot (v \otimes w) := (\sigma \cdot v) \otimes (\sigma' \cdot w)$. We have 
\begin{equation}
\label{indprod}
\textrm{Frob } V\circ W = \textrm{Frob } V \cdot \textrm{Frob }W
\end{equation}
so induction product of modules corresponds to multiplication of Frobenius image.

Given a partition $\lambda = (\lambda_1, \lambda_2, \dots, \lambda_k) \vdash n$, let $\mathfrak{S}_\lambda \subseteq \mathfrak{S}_n$ denote the Young subgroup $\mathfrak{S}_\lambda := \mathfrak{S}_{\{1,\dots,\lambda_1\}} \times \mathfrak{S}_{\{\lambda_1+1,\dots,\lambda_1+\lambda_2\}}  \times \cdots \times \mathfrak{S}_{\{n-\lambda_k,\dots,n\}}$. To any subgroup $X \subseteq \mathfrak{S}_n$ we associate two group algebra elements $[X]_+$ and $[X]_-$ defined by $[X]_+ = \sum_{w\in X} w$ and $[X]_+ = \sum_{w\in X} \textrm{sign}(w)w$. We will need the following standard lemma.
\begin{lemma}
\label{symmetrizationkills}
Let $\lambda, \mu \vdash n$. Then $[S_\lambda]_+$ kills $S^\mu$ unless $\lambda \preceq \mu$ and $[S_{\lambda'}]_-$ kills $S^\mu$ unless $\mu \preceq \lambda$.
\end{lemma}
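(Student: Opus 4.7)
The plan is to reduce both halves of the lemma to standard facts about Kostka numbers and to use the sign-twist $S^{\mu}\otimes \mathrm{sign}\cong S^{\mu'}$ to pass from the symmetrizer case to the antisymmetrizer case.

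First, I would observe that for any finite-dimensional $\symm_n$-module $V$, the element $[\symm_\lambda]_+$ acts as $|\symm_\lambda|$ times the projection $V \twoheadrightarrow V^{\symm_\lambda}$ onto the $\symm_\lambda$-fixed subspace. In particular $[\symm_\lambda]_+$ annihilates $V$ if and only if $V^{\symm_\lambda}=0$. Applying this to $V = S^\mu$, Frobenius reciprocity gives
\[
\dim (S^\mu)^{\symm_\lambda} \;=\; \dim \mathrm{Hom}_{\symm_\lambda}(\mathbf{1}, S^\mu) \;=\; \dim \mathrm{Hom}_{\symm_n}(M^\lambda, S^\mu) \;=\; K_{\mu,\lambda},
\]
where $M^\lambda = \Ind_{\symm_\lambda}^{\symm_n} \mathbf{1}$ is the Young permutation module and $K_{\mu,\lambda}$ is the Kostka number counting semistandard tableaux of shape $\mu$ and content $\lambda$. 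The classical fact that $K_{\mu,\lambda}>0$ precisely when $\lambda \preceq \mu$ then yields the first statement.

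For the second statement I would use the sign-twist: if $\text{sgn}$ denotes the one-dimensional sign representation of $\symm_n$, then $S^\mu \otimes \text{sgn} \cong S^{\mu'}$. Under this isomorphism, the action of $\sum_{w\in \symm_{\lambda'}} w$ on $S^{\mu'}$ corresponds to the action of $\sum_{w\in\symm_{\lambda'}} \sign(w)\, w = [\symm_{\lambda'}]_-$ on $S^\mu$, since the sign representation multiplies each $w$ by $\sign(w)$. Thus $[\symm_{\lambda'}]_-$ kills $S^\mu$ if and only if $[\symm_{\lambda'}]_+$ kills $S^{\mu'}$, and by the first part this happens exactly when $\lambda' \not\preceq \mu'$. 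Since conjugation reverses dominance order, $\lambda' \preceq \mu'$ is equivalent to $\mu \preceq \lambda$, giving the desired conclusion.

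There is no real obstacle here; the proof is entirely assembled from standard ingredients. The only points requiring mild care are verifying that the sign twist converts symmetrizers into antisymmetrizers with the correct subgroup, and citing the dominance reversal $\lambda'\preceq\mu' \iff \mu\preceq\lambda$. Both are routine and the whole argument can be compressed into a short paragraph once the Kostka-number formula for $\dim(S^\mu)^{\symm_\lambda}$ is invoked.
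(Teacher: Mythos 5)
Your proof is correct and complete: the identification of $[\symm_\lambda]_+$ (up to the scalar $|\symm_\lambda|$) with the projection onto $\symm_\lambda$-invariants, Frobenius reciprocity giving $\dim(S^\mu)^{\symm_\lambda}=K_{\mu,\lambda}$, the positivity criterion $K_{\mu,\lambda}>0 \iff \lambda\preceq\mu$, and the sign twist $S^\mu\otimes\mathrm{sign}\cong S^{\mu'}$ together with the dominance reversal $\lambda'\preceq\mu'\iff\mu\preceq\lambda$ assemble exactly as you describe. The paper states this as a standard lemma and gives no proof of its own, so there is nothing to compare against; your argument is the standard one and fills that gap correctly.
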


\subsection{Cyclic Sieving}
Cyclic sieving was introduced by Reiner, Stanton and White \cite{RSW} as a way to express various related results about the enumeration of fixed points of a cyclic action. If $X$ is a finite set, $C$ is a cyclic group of order $n$ generated by $c$ acting on $X$, and $P(q)$ is a polynomial in $\mathbb{N}[q]$, then we say that
\begin{defn}
The triple $(X,C, P(q))$ exhibits the cyclic sieving phenomenon if for all nonnegative integers $d$,
\[
|\{x\in X \mid c^d \cdot x = x \}| = P(\zeta^d)
\] 
where $\zeta$ is a primitive $n^{th}$ root of unity.
\end{defn}

The polynomial $P(q)$ is often given in terms of {\em $q$-analogs}. The $q$-analog of a positive integer $n$ is denoted $[n]_q$ and is defined to be $1+q+q^2+\cdots + q^{n-1}$. The $q$-analogs of $n!$, $\binom{n}{k}$ and the multinomial coefficient $\binom{n}{k_1,k_2, \dots, k_l}$ , are denoted and defined as follows:
\[
[n]_q! = [n]_q[n-1]_q\cdots[1]_q,
\]
\[
\begin{bmatrix} n\\k \end{bmatrix}_q = \frac{[n]_q!}{[n-k]_q![k]_q!},
\]
\[
\begin{bmatrix} n\\k_1, k_2, \dots k_l \end{bmatrix}_q = \frac{[n]_q!}{[k_1]_q!\cdots[k_l]_q!}.
\]

The {\em fake degree polynomial} of a representation is defined by 
\[
\textbf{fd}(S^{\lambda}) = q^{b(\lambda)}\frac{[r]_q!}{\prod_{(i,j \in \lambda)} [h(i,j)]_q}
\]
where the product is over all cells $(i,j)$ of $\lambda$, $h(i,j)$ is its hook length and $b(\lambda) = \lambda_2 + 2\lambda_3 + 3\lambda_4 + \cdots$ and the fake degree of a general representation is the sum of fake degrees of the irreducibles it contains, with multiplicity.

Cyclic sieving results are often proven via representation theory. In particular Reiner, Stanton and White \cite{RSW} realized the following was implied by a result of Springer's \cite{Springer}.
\begin{theorem} [Springer, 1974]
\label{fakedegree}
Let $V$ be a representation of $\mathfrak{S}_n$ with a basis $X$ which is preserved by the long cycle, $c$. Let $P(q) = \textbf{fd}(V)$. Then $(X, \langle c \rangle, P(q))$ exhibits the cyclic sieving phenomenon.
\end{theorem}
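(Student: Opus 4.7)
The plan is to translate the cyclic sieving identity, which is a purely numerical statement about fixed-point counts, into a character identity and then reduce to a classical fact about the long cycle.

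First I would observe that because $X$ is a basis of $V$ that is permuted by $c$, the action of $c^d$ on $V$ with respect to the basis $X$ is a permutation matrix. The trace of a permutation matrix is exactly the number of fixed points of the permutation, so
\[
|\{x \in X : c^d \cdot x = x\}| = \mathrm{tr}(c^d \mid V) = \chi_V(c^d).
\]
Thus the cyclic sieving claim is equivalent to the character identity $\chi_V(c^d) = \textbf{fd}(V)(\zeta^d)$.

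Next I would decompose $V \cong \bigoplus_{\lambda \vdash n} m_\lambda S^\lambda$. Both sides of the identity are linear in $V$: the left side by additivity of characters, and the right side because $\textbf{fd}$ extends to virtual representations by linearity (as stated in the definition following Theorem~\ref{fakedegree}). Therefore it suffices to prove
\[
\chi^\lambda(c^d) = \textbf{fd}(S^\lambda)(\zeta^d)
\]
for every irreducible $S^\lambda$, where $\chi^\lambda$ denotes the character of $S^\lambda$.

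The main obstacle is the last step, which is the content of Springer's regular element theorem. The long cycle $c = (1,2,\dots,n)$ is a regular element of $\mathfrak{S}_n$ of order $n$: its action on the reflection representation $\mathbb{C}^n$ has the primitive $n$-th root of unity $\zeta$ as an eigenvalue with an eigenvector avoiding every reflecting hyperplane $\{x_i = x_j\}$, and hence $c^d$ is regular with eigenvalue $\zeta^d$ when $\gcd(d,n)=1$, and in general Springer's argument handles $c^d$ uniformly. Springer's theorem asserts exactly that for a regular element $w$ with associated eigenvalue $\omega$, the character value $\chi^\lambda(w)$ equals the fake degree polynomial $\textbf{fd}(S^\lambda)(\omega)$. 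Applying this with $w = c^d$ and $\omega = \zeta^d$ gives the required identity. A more hands-on route, if one wishes to avoid citing Springer's theorem, is to compute $\chi^\lambda(c^d)$ via the Murnaghan–Nakayama rule (a signed count of border-strip tableaux of shape $\lambda$ with strips of size $n/\gcd(n,d)$) and match this against the hook-length formula for $\textbf{fd}(S^\lambda)$ specialized at $\zeta^d$, which reduces to the well-known principal specialization $s_\lambda(1,q,q^2,\dots,q^{n-1})$ evaluated at a root of unity; the matching comes from cancellation of hook factors whose length is divisible by the order of $\zeta^d$.
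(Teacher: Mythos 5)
Your proposal is correct and follows the same route the paper intends: the paper states this result by citing Springer's regular element theorem (via Reiner--Stanton--White), and your argument is exactly that standard derivation --- permutation matrix trace equals fixed-point count, linearity over irreducibles, then Springer's theorem identifying $\chi^\lambda(c^d)$ with $\textbf{fd}(S^\lambda)(\zeta^d)$. One small strengthening of your handling of $c^d$: the regular eigenvector $v=(1,\zeta,\dots,\zeta^{n-1})$ of $c$ is itself an eigenvector of $c^d$ with eigenvalue $\zeta^d$ and still avoids every hyperplane $x_i=x_j$, so $c^d$ is regular for \emph{every} $d$ (equivalently, Springer pins down the eigenvalue multiset of $c$ on $S^\lambda$, and the values at all powers follow), so no separate treatment of $\gcd(d,n)>1$ is needed.
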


\section{Set partitions and the action of $\symm_{n-1}$}
\label{basis-section}
The indexing set for our combinatorial basis will be a certain partially labelled subset $\Phi(n)$ of noncrossing set partitions of $[n]$.
\begin{defn}
Let $n,k,x,t$ be nonnegative integers. We define the following sets of set partitions:
\begin{itemize}
\item Let $\Psi(n)$ denote the set of all set partitions of $n$ for which all blocks not containing $n$ are size 1 or size 2, and blocks of size 1 not containing $n$ are labelled with either a $\theta$ or a $\xi'$. \\
\item
Let $\Psi(n,k)$ be the set of partitions in $\Psi(n)$ in which the block containing $n$ is size $k$. \\
\item Let $\Psi(n,k,t,x)$ denote the set of partitions in $\Psi(n,k)$ which have exactly $t$ singletons labelled $\theta$ and exactly $x$ singletons labelled $\xi'$. \\
\item Let $\Phi(n)$, $\Phi(n,k)$, and $\Phi(n,k,t,x)$ be the subsets of $\Psi(n)$, $\Psi(n,k)$, or $\Psi(n,k,t,x)$ respectively which consist only of the those set partitions which are noncrossing.
\end{itemize}
\end{defn} 
For the rest of this paper, when we refer to the singleton blocks of a partition $\pi \in \Psi(n)$, we only refer to those blocks of size 1 that do not contain $n$, even if the block containing $n$ happens to be size 1. Similarly when we refer to the blocks of size two we refer to only the blocks of size two that do not contain $n$. 

There is a natural action of $\symm_{n-1}$ on $\Psi(n)$, given by simply permuting elements between blocks and preserving labels of blocks. The sets $\Psi(n,k)$ and $\Psi(n,k,x,t)$ are closed under this action, but $\Phi(n)$ is not, as permuting the elements of a noncrossing permutation may introduce crossings. However, we can define an action of $\symm_{n-1}$ on the linearization $\mathbb{C}\Phi(n)$ by mapping $\mathbb{C}\Psi(n)$ into $\wedge \{\Theta_{n-1}, \Xi'_{n-1}\}$ in such a way that $\mathbb{C}\Phi(n)$ is $\symm_{n-1}$-invariant and pulling back the $\symm_{n-1}$-action.

Towards this goal, to each element  $\pi \in \Psi(n)$ we will associate an element $G_\pi$ of $\wedge \{\Theta_{n-1}, \Xi'_{n-1}\}$. To define $G_\pi$ we will make use of a tool we will call  \emph{block operators}. Let $B$ be a block of a set partition $\pi \in \Psi(n)$, i.e. $B$ is a nonempty subset of $[n]$ that either contains $n$ or is size at most two. Define the \emph{block operator} $\tau_B : \wedge \{\Theta_{n-1}, \Xi'_{n-1}\} \rightarrow \wedge \{\Theta_{n-1}, \Xi'_{n-1}\}$ by 
\begin{equation}
\tau_B(f) = \begin{cases} (\prod_{i \in B\setminus \{n\}} \theta_i ) \odot f  & n \in B \\ \xi'_i \cdot(\theta_j \odot f) + \xi'_j \cdot (\theta_i \odot f) & n \not\in B, B = \{i,j\}\\ f & B = \{i_\theta\} \\ \xi'_i \cdot(\theta_i \odot f) & B = \{i_\xi'\} \end{cases}
\end{equation}
It will be important for what follows to note that block operators corresponding to blocks not containing $n$ commute
\begin{lemma}
\label{block-operator-commute}
Let $A$ and $B$ be two nonempty subsets of $[n-1]$ of size at most two. Then $\tau_A$ and $\tau_B$ commute. 
\end{lemma}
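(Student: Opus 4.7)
The plan is to verify the equality $\tau_A \tau_B = \tau_B \tau_A$ by a case analysis on the types of $A$ and $B$ (singleton labeled $\theta$, singleton labeled $\xi'$, or two-element subset of $[n-1]$). In each case the computation reduces to three basic anticommutation relations among the operators appearing in the definition of $\tau_B$. Viewing $\wedge\{\Theta_{n-1}, \Xi'_{n-1}\}$ as a $\ZZ/2$-graded algebra, both $\xi'_i\cdot$ and $\theta_i\odot$ are odd operators. Since $\odot$ is an antiderivation of degree $-1$ with $\omega_i\odot\omega_j = \delta_{ij}$ on generators, and in particular $\theta_i\odot\xi'_j = 0$, one obtains
\begin{align*}
\xi'_i\xi'_j &= -\xi'_j\xi'_i,\\
(\theta_i\odot)(\theta_j\odot) &= -(\theta_j\odot)(\theta_i\odot) \quad \text{for } i\neq j,\\
(\theta_i\odot)(\xi'_j\cdot) &= -(\xi'_j\cdot)(\theta_i\odot),
\end{align*}
together with the nilpotencies $(\theta_i\odot)^2 = (\xi'_i)^2 = 0$.

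If either $A$ or $B$ is a $\theta$-labeled singleton, then $\tau_A$ or $\tau_B$ is the identity and the claim is immediate. The remaining cases are (i) both blocks are $\xi'$-labeled singletons, (ii) one is a $\xi'$-labeled singleton and one is a two-element block, and (iii) both are two-element blocks. In each case I expand $\tau_A\tau_B(f)$ as a sum of terms of the form $\pm\,\xi'_a\xi'_b(\theta_c\odot\theta_d\odot f)$, use the anticommutation relations above to bring each summand into the normal form with the two $\xi'$s to the left of the two $\theta\odot$s, and compare term by term with the analogous expansion of $\tau_B\tau_A(f)$. Swapping $A$ and $B$ reverses the order of the two $\xi'$s and of the two $\theta\odot$s, producing two sign changes that cancel, so the two products agree.

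The main complication is case (iii) when $|A\cap B|=1$, where each product expands into four terms rather than one. Here the nilpotency relations do the essential work: the summands in which the shared index of $A$ and $B$ contributes a repeated $\theta\odot$ or $\xi'$ factor vanish outright, leaving two surviving terms on each side that match via the parity-of-signs argument above. No deep idea is required beyond this bookkeeping; the main obstacle is simply running through the sub-cases $|A\cap B|\in\{0,1,2\}$ carefully and tracking the signs that appear when commuting $\theta_i\odot$ past $\xi'_j\cdot$.
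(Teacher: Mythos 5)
Your proof is correct and rests on exactly the same observation as the paper's: the operators $\xi'_1\cdot,\dots,\xi'_{n-1}\cdot,\theta_1\odot,\dots,\theta_{n-1}\odot$ pairwise anticommute (and square to zero), and each block operator for a block avoiding $n$ is a quadratic (or trivial) expression in them, so products of two such even expressions commute. The paper states this in one line, while you carry out the same sign bookkeeping explicitly by cases; no substantive difference.
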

\begin{proof}
The lemma reduces to the fact that the family of operators $\{\xi'_1 \cdot, \dots, \xi'_{n-1} \cdot, \theta_1 \odot, \dots, \theta_{n-1} \odot \}$ all anticommute, and that each block operator is a degree two polynomial in these.
\end{proof}
Block operators also interact nicely with the action of $\mathfrak{S}_{n-1}$. 
\begin{lemma}
\label{blockoperatorequiv}
Let $A$ be a subset of $[n-1]$ and let $\sigma \in \mathfrak{S}_{n-1}$. Then for any $f\in \wedge\{\Theta,_{n-1}, \Xi'_{n-1}\}$
\[
\sigma \cdot \tau_A (f) = \tau_{\sigma \cdot A}( \sigma \circ f)
\]
where the action of $\mathfrak{S}_n$ on subsets is given by $\sigma \cdot \{a_1, \dots, a_k\} = \{\sigma(a_1), \dots, \sigma(a_k)\}.$ 
\end{lemma}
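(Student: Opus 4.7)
The plan is to reduce the equivariance to two elementary facts about how $\symm_{n-1}$ interacts with the two building-block operations appearing in the definition of $\tau_A$, and then run a short case analysis on the type of block $A$.

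The two preliminary equivariances I would prove first are that, for every $g \in \wedge \{\Theta_{n-1}, \Xi'_{n-1}\}$ and every $i \in [n-1]$,
\[
\sigma \cdot (\xi'_i \cdot g) = \xi'_{\sigma(i)} \cdot (\sigma \cdot g) \qquad \text{and} \qquad \sigma \cdot (\theta_i \odot g) = \theta_{\sigma(i)} \odot (\sigma \cdot g).
\]
The first identity is immediate because $\symm_{n-1}$ acts as an algebra automorphism sending $\xi'_i$ to $\xi'_{\sigma(i)}$. For the second, the cleanest route is via the inner product $\langle -, - \rangle$: left multiplication $\theta_i \cdot$ and exterior differentiation $\theta_i \odot$ are mutual adjoints under $\langle -, - \rangle$, as one checks on the basis $\{\theta_S \cdot \xi'_T\}$. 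Since $\symm_{n-1}$ permutes this basis up to sign and therefore preserves $\langle -, - \rangle$, and since the analogue of the first identity with $\theta$ in place of $\xi'$ gives $\sigma(\theta_i \cdot)\sigma^{-1} = \theta_{\sigma(i)} \cdot$, taking adjoints on both sides yields $\sigma(\theta_i \odot)\sigma^{-1} = \theta_{\sigma(i)} \odot$, which is exactly the second identity. A more pedestrian alternative is to verify it on a monomial $\theta_S \cdot \xi'_T$ directly, at the cost of tracking the sign picked up when reordering $\sigma(S)$ into increasing order.

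With these in hand, the lemma reduces to matching the piecewise definition of $\tau_A$ case by case, under the tacit convention that the label carried by a singleton block travels with its underlying element under $\sigma$. Since $A \subseteq [n-1]$ the case $n \in A$ does not arise, so only $A = \{i_\theta\}$, $A = \{i_{\xi'}\}$, and $A = \{i,j\}$ need to be considered. The $\theta$-singleton case is trivial since $\tau_A$ is the identity. For the $\xi'$-singleton case,
\[
\sigma \cdot \tau_{\{i_{\xi'}\}}(f) = \sigma \cdot \bigl( \xi'_i \cdot (\theta_i \odot f) \bigr) = \xi'_{\sigma(i)} \cdot \bigl( \theta_{\sigma(i)} \odot (\sigma \cdot f) \bigr) = \tau_{\{\sigma(i)_{\xi'}\}}(\sigma \cdot f),
\]
by applying the two preliminary equivariances in sequence. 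The size-two case is analogous: expand $\tau_{\{i,j\}}(f) = \xi'_i \cdot (\theta_j \odot f) + \xi'_j \cdot (\theta_i \odot f)$ and push $\sigma$ through each summand. The only genuine obstacle in this proof is the equivariance of contraction $\theta_i \odot$; once that is secured, the remaining case analysis is purely formal.
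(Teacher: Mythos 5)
Your proof is correct, and there is nothing in the paper to compare it against: Lemma~\ref{blockoperatorequiv} is stated there without any proof, so your write-up supplies exactly the verification the paper treats as immediate. The only substantive input is the equivariance of the contraction operators, $\sigma\cdot(\theta_i\odot g)=\theta_{\sigma(i)}\odot(\sigma\cdot g)$; once that and the (trivial) equivariance of left multiplication by $\xi'_i$ are in place, your three-case check is complete, the case $n\in A$ is rightly excluded since $A\subseteq[n-1]$, and the symmetry of $\xi'_i\cdot(\theta_j\odot f)+\xi'_j\cdot(\theta_i\odot f)$ in $i,j$ disposes of any ordering concern in the size-two case. Two caveats are worth making explicit. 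First, both your adjointness route and the lemma itself require $\odot$ to be the \emph{signed} contraction, $\theta_i\odot(\omega_{s_1}\cdots\omega_{s_k})=(-1)^{j-1}\omega_{s_1}\cdots\widehat{\omega_{s_j}}\cdots\omega_{s_k}$ when $i=s_j$; the paper's displayed formula omits the sign, but the signed convention is forced anyway by the anticommutativity invoked in the proof of Lemma~\ref{block-operator-commute}, and with unsigned deletion your key identity fails (test $\sigma=(1\,2)$, $g=\theta_1\theta_2$, $i=1$). Second, for $(\theta_i\cdot)^{*}=\theta_i\odot$ and $\sigma^{*}=\sigma^{-1}$ you need the monomial basis to be orthonormal, not merely orthogonal as the paper literally says; your ``pedestrian'' alternative of checking on monomials $\theta_S\xi'_T$ sidesteps this entirely and is the safer phrasing. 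One optional addition: the paper later pushes $\sigma$ through \emph{all} block operators of $G_\pi$, including $\tau_B$ with $n\in B$, where $(\prod_{i\in B\setminus\{n\}}\theta_i)\odot$ is equivariant only up to the sign of sorting $\sigma(B\setminus\{n\})$; that case lies outside the statement you were asked to prove, so its omission is not a gap in your argument, but noting it would make the lemma match its subsequent use.
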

We can now define $G_\pi$.
\begin{defn} 
Let $\pi \in \Psi(n)$ with blocks $B_1, \dots ,B_k$ and $n \in B_k$. Then 
\begin{equation}
G_\pi := \tau_{B_1} \cdots \tau_{B_k} (\theta_1\theta_2 \cdots \theta_{n-1}).
\end{equation}
\end{defn}
We can also give a description of the $G_\pi$ not involving block operators as follows.
\begin{proposition}
Let $\pi \in \Psi(n)$. Take the product of $\theta_i\xi'_i - \theta_j\xi'_j$ for every size two block $\{i,j\}$ of $\pi$ with $i<j$. For each singleton block $\{i\}$ of $\pi$, multiply by $\theta_i$ or $\xi'_i$ according to its label in increasing order. Then $G_\pi$ is equal to the result multiplied by $(-1)^\textrm{inv}(\pi')$ where $\pi'$ is the word formed by listing all size two blocks not containing $n$ increasing within each block and by order of increasing minimal element, then listing all size one blocks not containing n in increasing order.
\end{proposition}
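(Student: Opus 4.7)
The plan is to compute $G_\pi = \tau_{B_1}\cdots\tau_{B_k}(\theta_1\cdots\theta_{n-1})$ directly by unfolding the definition of the block operators, applying them in a convenient order (permitted by Lemma \ref{block-operator-commute}, since all block operators actually commute as operators), and carefully tracking signs via the Leibniz rule for $\odot$.

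First I would apply $\tau_{B_k}$, the operator for the block containing $n$. By its defining formula $\tau_{B_k}(f) = \bigl(\prod_{c \in B_k \setminus \{n\}} \theta_c\bigr) \odot f$ and iterated interior product on the sorted product $\theta_1\cdots\theta_{n-1}$, this reduces the initial state to an explicit signed multiple of $\theta_I$, where $I = [n-1] \setminus (B_k \setminus \{n\})$. Next, since $I$ is partitioned by $\pi$ into singletons and pairs $\{a_k,b_k\}$, I would rewrite the standard sorted product $\theta_I$ in the order prescribed by $\pi'$:
\[
\theta_I \;=\; (-1)^{\inv(\pi')}\,\theta_{a_1}\theta_{b_1}\cdots\theta_{a_r}\theta_{b_r}\theta_{s_1}\cdots\theta_{s_{p+q}},
\]
using the standard exterior-algebra shuffle sign.

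The heart of the argument consists of two ``replacement'' identities, each established by a short Leibniz-rule calculation together with the anticommutation relations among $\theta_i$'s and $\xi'_j$'s:
\begin{enumerate}
\item If $h,g$ do not involve any of $\theta_a,\theta_b,\xi'_a,\xi'_b$, then $\tau_{\{a,b\}}(h\,\theta_a\theta_b\,g) = h\,(\theta_a\xi'_a - \theta_b\xi'_b)\,g$.
\item If $h,g$ do not involve $\theta_x,\xi'_x$, then $\tau_{\{x_{\xi'}\}}(h\,\theta_x\,g) = h\,\xi'_x\,g$.
\end{enumerate}
In each case the sign picked up by the interior product is cancelled exactly by the sign from commuting $\xi'$'s past the intervening $\theta$'s, so no extra sign appears. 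Applying (1) to each size-two block and (2) to each singleton-$\xi'$ block (with singleton-$\theta$ operators acting as the identity), one block at a time, transforms the intermediate state into precisely the product $M$ described in the proposition; each replacement leaves the $h,g$ hypothesis intact for the next step because processing a block only introduces factors in the indices of that block.

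Assembling the sign contributions from the first two steps with the sign-free action of Step 3 yields the claimed equality $G_\pi = (-1)^{\inv(\pi')} M$. The main obstacle is the sign bookkeeping, especially the verification of identities (1) and (2) above: although each is a one-line statement, the delicate cancellation between the Leibniz signs of $\odot$ and the anticommutation signs of the generators must be checked carefully. Once those lemmas are in hand, the rest of the proof is a routine accumulation of signs along the order in which the operators are applied.
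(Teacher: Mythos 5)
Your approach is structurally the same as the paper's own proof: use Lemma~\ref{block-operator-commute} to process the blocks in a convenient order, apply the operator for the block containing $n$ first, and reduce each pair and singleton block to the local computations $\tau_{\{a,b\}}\theta_a\theta_b = \theta_a\xi'_a - \theta_b\xi'_b$ and $\tau_{\{x_{\xi'}\}}\theta_x = \xi'_x$, with the factor $(-1)^{\inv(\pi')}$ coming from reordering the surviving $\theta$-variables into block order. Your replacement identities (1) and (2) are correct, provided $\odot$ is taken with the usual antiderivation signs (which is also what Lemma~\ref{block-operator-commute} and the identity $\tau_{\{i,j\}}\theta_i\theta_j=\theta_i\xi'_i-\theta_j\xi'_j$ force, even though the paper's displayed formula for $\odot$ omits the sign), and although your parenthetical claim that \emph{all} block operators commute goes beyond what Lemma~\ref{block-operator-commute} states, you never actually need it, since $\tau_{B_k}$ is rightmost and hence applied first anyway.

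The genuine gap is in the final assembly. You describe $\tau_{B_k}(\theta_1\cdots\theta_{n-1})$ only as ``an explicit signed multiple of $\theta_I$'' and then assert that the signs from the first two steps combine to give exactly $(-1)^{\inv(\pi')}$; this silently requires the step-one sign to be $+1$, which is neither automatic nor argued. Under the signed convention your identities rely on, contracting out $\theta_c$ for $c\in B_k\setminus\{n\}$ contributes a sign governed by how many surviving indices lie below $c$: for instance, for $n=4$ and $\pi=\{1,3\}/\{2,4\}\in\Psi(4)$ one gets $\tau_{\{2,4\}}(\theta_1\theta_2\theta_3)=\theta_2\odot(\theta_1\theta_2\theta_3)=-\theta_1\theta_3$, so the step-one sign is $-1$ while $\inv(\pi')=\inv(13)=0$, and the word $\pi'$ involves only elements outside the block of $n$, so this sign has nowhere else to be absorbed. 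Hence ``assembling the sign contributions'' cannot simply be asserted: you must fix the precise convention for $\odot$ and for the order of the product over $B_k\setminus\{n\}$, compute the sign of $\tau_{B_k}(\theta_1\cdots\theta_{n-1})$ explicitly, and show how it combines with the reordering sign to produce the stated formula. This is exactly the point at which the paper's own proof is terse as well, so until that sign is pinned down your argument is an outline of the paper's proof rather than a complete one.
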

For example, if $\pi = 1_\theta / 2,5 / 3,4 / 5,6,8 / 7_\xi'$, then 
\begin{equation}
G_\pi = (-1)^{\inv(253417)}(\theta_2\xi'_2 - \theta_5\xi'_5)(\theta_3\xi'_3-\theta_4\xi'_4)\theta_1\xi'_7
\end{equation}
\begin{proof}
By Lemma~\ref{block-operator-commute} we can assume that all of the block operators corresponding to size two blocks appear before block operators according to singletons. Applying $\tau_{B_k}$ and any block operators corresponding to singletons to $ (\theta_1\theta_2 \cdots \theta_{n-1})$ removes all $\theta_i$ indexed by elements of $B_k$ and replaces $\theta_i$ indexed by $\xi'$-labelled singletons with $\xi'_i$. Note that $\tau_{\{i,j\}} \theta_i \theta_j = \theta_i\xi'_i - \theta_j\xi'_j$, and the proof follows.
\end{proof}

The $\symm_{n-1}$ action on these $G_\pi$ matches the natural $\symm_{n-1}$ action on $\Psi(n)$, up to sign.
\begin{proposition}
Let $\sigma \in \mathfrak{S}_{n-1}$ and $\pi \in \Psi(n)$. Then $\sigma \circ G_\pi = \mathrm{sign}(\sigma) G_{\sigma \circ \pi}$.
\end{proposition}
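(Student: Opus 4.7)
The plan is to use equivariance of the block operators to transport the $\sigma$-action past each $\tau_{B_i}$ in the composition defining $G_\pi$, and then extract the sign from the action on the base element $\theta_1 \cdots \theta_{n-1}$.

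For each block $B_i$ of $\pi$ not containing $n$, Lemma \ref{blockoperatorequiv} immediately gives $\sigma \cdot \tau_{B_i}(f) = \tau_{\sigma \cdot B_i}(\sigma \cdot f)$. For the block $B_k$ containing $n$, I would establish an analogous equivariance by a direct computation: since $\tau_{B_k}(f) = (\prod_{i \in B_k \setminus \{n\}}\theta_i) \odot f$ is built from operators $\theta_i \odot (-)$, each of which commutes with $\sigma$ via index permutation (because $\sigma$ acts on $\wedge \{\Theta_{n-1}, \Xi'_{n-1}\}$ as an algebra automorphism), essentially the same reasoning as in Lemma \ref{blockoperatorequiv} yields $\sigma \cdot \tau_{B_k}(f) = \tau_{\sigma \cdot B_k}(\sigma \cdot f)$. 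Combining these with the identity
\[
\sigma \cdot (\theta_1 \cdots \theta_{n-1}) \;=\; \theta_{\sigma(1)} \cdots \theta_{\sigma(n-1)} \;=\; \sign(\sigma)\, \theta_1 \cdots \theta_{n-1},
\]
and pulling the scalar out of the block operators by linearity gives
\[
\sigma \cdot G_\pi \;=\; \tau_{\sigma \cdot B_1} \cdots \tau_{\sigma \cdot B_k}\bigl(\sign(\sigma)\, \theta_1 \cdots \theta_{n-1}\bigr) \;=\; \sign(\sigma)\, G_{\sigma \cdot \pi}.
\]

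The main subtlety I anticipate is in establishing the equivariance for the block $B_k$ containing $n$. The product $\prod_{i \in B_k \setminus \{n\}} \theta_i$ depends on an ordering convention, so one must carefully verify that any sign arising from rearranging the $\theta_{\sigma(i)}$'s in the image is absorbed consistently into the definition of $\tau_{\sigma \cdot B_k}$, or equivalently, that the $\sign(\sigma)$ coming from rearranging $\theta_1 \cdots \theta_{n-1}$ accounts correctly for all the ordering changes across the decomposition $[n-1] = (B_k \setminus \{n\}) \sqcup ([n-1] \setminus B_k)$. Beyond this bookkeeping, the proof reduces to the iterated application of equivariance described above.
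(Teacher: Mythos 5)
Your plan is the same as the paper's: transport $\sigma$ across the block operators and extract $\sign(\sigma)$ from $\sigma \cdot (\theta_1\cdots\theta_{n-1})$. However, the subtlety you flag at the block $B_k$ containing $n$ is not mere bookkeeping, and the identity you assert there, $\sigma\cdot\tau_{B_k}(f)=\tau_{\sigma\cdot B_k}(\sigma\cdot f)$, fails in general. The singleton and two-element block operators are genuinely equivariant because their defining expressions are symmetric in the indices involved; by contrast, for $n\in B_k$ the operator $\tau_{B_k}$ is an ordered composition of the contractions $\theta_i\odot$, and these anticommute (this anticommutativity is precisely what the proof of Lemma~\ref{block-operator-commute} uses). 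Conjugating by $\sigma$ therefore produces $\tau_{\sigma\cdot B_k}$ only up to the sign of the permutation that re-sorts $\sigma(B_k\setminus\{n\})$ into increasing order, and that residual sign does not cancel against anything else. Concretely, take $n=3$, $\pi$ the one-block partition $\{1,2,3\}$, and $\sigma=(1\,2)$: then $G_\pi=(\theta_1\theta_2)\odot(\theta_1\theta_2)=\pm 1$ is a nonzero scalar, hence fixed by $\sigma$, while $\sigma\cdot\pi=\pi$, so the claimed identity would force $G_\pi=-G_\pi$. What the push-through argument actually yields is $\sigma\cdot G_\pi=(-1)^{m}\,\sign(\sigma)\,G_{\sigma\cdot\pi}$, where $m$ is the number of pairs $a<b$ in $B_k\setminus\{n\}$ with $\sigma(a)>\sigma(b)$; the extra factor is trivial when $|B_k\setminus\{n\}|\le 1$ or when $\sigma$ preserves the relative order of $B_k\setminus\{n\}$, but not in general.

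To be fair, the paper's own proof makes exactly the same move, applying Lemma~\ref{blockoperatorequiv} to $B_k$ even though that lemma is stated (and holds on the nose) only for subsets of $[n-1]$, so your proposal reproduces the paper's argument, gap included. But the step you defer as ``careful verification'' is the one place where the argument cannot be completed as stated: no choice of ordering convention for $\prod_{i\in B_k\setminus\{n\}}\theta_i$ makes the $n$-block operator strictly $\symm_{n-1}$-equivariant, because the operator changes sign under odd reorderings while the set $\sigma\cdot B_k$ does not record an order. A correct write-up should either carry the sign $(-1)^m$ through explicitly, or restrict the statement to the situations in which it is later applied and check that those applications are insensitive to (or correctly account for) this sign.
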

\begin{proof}
Using the block operator definition of $G_\pi$ and Lemma~\ref{blockoperatorequiv} we have,
\begin{align}
\sigma \circ G_\pi &= \sigma \circ (\tau_{B_1} \cdots \tau_{B_k} (\theta_1\theta_2 \cdots \theta_{n-1}))\\ & = \tau_{\sigma(B_1)} \cdots \tau_{\sigma(B_k)}( \sigma \circ (\theta_1\theta_2 \cdots \theta_{n-1})) \\ &= \tau_{\sigma(B_1)} \cdots \tau_{\sigma(B_k)}( \textrm{sign} (\sigma) \theta_1\theta_2 \cdots \theta_{n-1}) \\ &= \textrm{sign}(\sigma) G_{\sigma \circ \pi}
\end{align}
\end{proof}

The goal of the remainder of this section is to show that $\textrm{span} (\{ G_\pi \mid \pi \in \Phi(n)\})$ is $\symm_{n-1}$ invariant. For this end we will need the following two relations of block operators.
\begin{lemma}
\label{skein}
Let $a,b,c,d \in [n-1]$. Then 
\begin{equation}
\tau_{\{a,b\}}\tau_{\{c,d\}} +  \tau_{\{a,c\}}\tau_{\{b,d\}} +\tau_{\{a,d\}}\tau_{\{b,c\}} = 0
\end{equation}
as operators on the ring $\wedge \{\Theta_{n-1}, \Xi'_{n-1}\}$
\end{lemma}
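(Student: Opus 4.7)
The plan is to reduce the identity to a sign-bookkeeping computation by expanding each block operator into creation/annihilation-type pieces. Write $\tau_{\{i,j\}} = c_i \bar c_j + c_j \bar c_i$, where $c_i$ denotes left multiplication by $\xi'_i$ and $\bar c_i$ denotes contraction $\theta_i \odot (-)$. Since the generators $\theta_1, \ldots, \theta_{n-1}, \xi'_1, \ldots, \xi'_{n-1}$ of $\wedge \{\Theta_{n-1}, \Xi'_{n-1}\}$ pairwise anticommute, a direct check on basis monomials shows that the $2(n-1)$ operators $c_i$ and $\bar c_j$ themselves pairwise anticommute, with all squares equal to zero. From this vantage each $\tau_{\{i,j\}}$ is a degree-two element in the exterior algebra of operators generated by the $c_i$'s and $\bar c_j$'s, which in particular reproves Lemma~\ref{block-operator-commute}.

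With this rewriting in hand, I would expand each of the three products on the left-hand side into four monomials of the form $c_p \bar c_q c_r \bar c_s$, for a total of twelve monomials. Using the anticommutation $\bar c_q c_r = -c_r \bar c_q$, each of these can be moved to the canonical ordered form $\pm\, c_p c_r \bar c_q \bar c_s$ with $p < r$ and $q < s$. The six possible canonical monomials are indexed by the three ways to split $\{a,b,c,d\}$ into two unordered pairs, together with the choice of which pair is assigned to the $c$'s and which to the $\bar c$'s.

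The final step is to tally the coefficient of each canonical monomial and verify cancellation. One finds that for each of the six canonical monomials, exactly two of the three products on the left-hand side contribute, and they contribute with opposite signs, so the total vanishes. There is no conceptual obstacle beyond this calculation; the only care required is in tracking the signs that arise from transposing anticommuting operators during reordering, and this is straightforward given the uniform anticommutation of the $c_i$'s and $\bar c_j$'s.
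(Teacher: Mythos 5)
Your proof is correct and is essentially the paper's argument carried out in detail: the paper's proof is simply ``a straightforward calculation from the definition of $\tau$,'' and you perform exactly that expansion, organized via the pairwise-anticommuting operators $\xi'_i \cdot$ and $\theta_j \odot$ that the paper itself invokes in proving Lemma~\ref{block-operator-commute}; your sign tally (each of the six canonical monomials $c_p c_r \bar c_q \bar c_s$ appearing in exactly two of the three products with opposite signs) checks out. The only implicit assumption, shared with the paper, is that $a,b,c,d$ are distinct.
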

\begin{proof}
This is a straightforward calculation from the definition of $\tau$.
\end{proof}
\begin{lemma}
\label{2-cross-more-than-2}
Let $A = \{a_1 < a_2\} \subset [n-1]$ and $B \subset [n]$ be two disjoint sets with $n \in B$. Let $b_1<b_2<\cdots < b_m$ be the elements of $B$ that lie between $a_1$ and $a_2$, and suppose at least one such element exists. Then
\begin{equation}
\tau_A \tau_B + \tau_{\{a_1,b_1\}} \tau_{B + a_2 - b_1}  + \sum_{i=1}^{m-1} \tau_{\{b_i, b_{i+1}\}} \tau_{B+a_1+a_2- b_i-b_{i+1}} +\tau_{\{b_m, a_2\}} \tau_{B+a_1-b_m} = 0
\end{equation}
as operators on the ring $\wedge \{\Theta_{n-1}, \Xi'_{n-1}\}$
\end{lemma}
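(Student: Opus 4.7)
The plan is to verify this identity at the level of operators on $\wedge\{\Theta_{n-1}, \Xi'_{n-1}\}$ by expanding every $\tau$-operator in terms of the ``Clifford-like'' generators $d_i := \theta_i \odot (\cdot)$ and $e_i := \xi'_i \wedge (\cdot)$. These generators satisfy the anticommutation relations $\{d_i, d_j\} = \{e_i, e_j\} = \{e_i, d_j\} = 0$ together with $d_i^2 = e_i^2 = 0$, so rearranging a product of $e$'s and $d$'s is controlled purely by transposition signs. In these terms one has $\tau_{\{i,j\}} = e_i d_j + e_j d_i$ for chords $\{i,j\} \subseteq [n-1]$, and $\tau_Q = \prod_{k\in Q\setminus\{n\}} d_k$ (in the canonical order fixed by the definition of $\tau$) for blocks $Q$ containing $n$. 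The lemma is thus reduced to a polynomial identity in these generators.

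The first simplification is to peel away the ``outside'' elements of $B$: every factor $d_k$ with $k \in B\setminus\{n\}$ and $k \notin (a_1,a_2)$ appears in every LHS summand and can be commuted past all the other operators (up to a uniform sign) and factored out. This reduces to the case $B\setminus\{n\} = \{b_1, \ldots, b_m\}$, and puts us in the setting $E := A \cup (B\setminus\{n\}) = \{a_1, b_1, \ldots, b_m, a_2\}$. Next, I would expand each of the $m+2$ summands $\tau_P \tau_Q$ on the LHS, where $P$ ranges over the chain of chords $\{a_1, a_2\}, \{a_1, b_1\}, \{b_1, b_2\}, \ldots, \{b_{m-1}, b_m\}, \{b_m, a_2\}$, using $\tau_P = e_i d_j + e_j d_i$. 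Each summand splits into two pieces; after commuting the ``extra'' $d_j$ arising from $\tau_P$ past the $d$-factors of $\tau_Q$ to canonical position, every piece takes the form $\pm\, e_x \cdot D_{E\setminus\{x\}}$ for a unique $x \in E$, where $D_Y := \prod_{y\in Y}d_y$ in increasing order.

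The final, combinatorial step is to track signs. Each $x \in E$ appears as the $e$-index of exactly two of the $2(m+2)$ pieces, coming from the two chords in the chain that contain $x$: $a_1$ from $\{a_1, a_2\}$ and $\{a_1, b_1\}$; $a_2$ from $\{a_1, a_2\}$ and $\{b_m, a_2\}$; and each $b_i$ from $\{b_{i-1}, b_i\}$ and $\{b_i, b_{i+1}\}$ under the conventions $b_0 := a_1$, $b_{m+1} := a_2$. The main obstacle, and the technical heart of the proof, is verifying that the two contributions to each $e_x D_{E\setminus\{x\}}$ coefficient carry opposite signs, producing the complete cancellation. These signs arise from two sources: the intrinsic asymmetry between $e_i d_j$ and $e_j d_i$ inside $\tau_P$, and the number of transpositions needed to place the chord's $d_j$ into its canonical slot within $D_{E\setminus\{x\}}$. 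The cancellations at the interior indices $b_i$ propagate telescopically along the chain of chords, while the $\tau_A \tau_B$ term contributes exactly the correction needed at the endpoints $a_1$ and $a_2$; working out this sign bookkeeping carefully is where I would expect to spend most of the effort.
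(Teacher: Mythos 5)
Your overall route is essentially the paper's: expand each summand in the anticommuting generators $d_i=\theta_i\odot$ and $e_i=\xi'_i\cdot$, observe that every resulting term is $\pm\, e_x D_{E\setminus\{x\}}$ with $E=\{a_1,b_1,\dots,b_m,a_2\}$ (your peeling step is legitimate, since the indices of $B$ outside $(a_1,a_2)$ occupy the same relative position in every modified block and cross an even number of chord generators, so the extracted sign is uniform across summands), and then note that each $x\in E$ receives exactly two contributions, one from each of the two consecutive chords of the path $a_1,b_1,\dots,b_m,a_2$ containing $x$. Up to packaging, this is the same pairing the paper uses when it matches $\xi'_{c_1}\cdot\theta_{c_2}\odot\theta_{c_3}\odot$ against $\xi'_{c_1}\cdot\theta_{c_3}\odot\theta_{c_2}\odot$.

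The genuine gap is that the one step you defer, the sign bookkeeping, is the entire content of the lemma, and it does not come out as cleanly as your last paragraph asserts. Fix the convention that $\tau_Q$ acts as the composition of the $\theta_i\odot$, $i\in Q\setminus\{n\}$, in increasing order (any other uniform convention rescales all summands by the same sign). Then the chord $\{x,y\}$ paired with the big block $Q$ contributes $(-1)^{\#\{q\in Q\setminus\{n\}\,:\,q<y\}}$ to the coefficient of $e_xD_{E\setminus\{x\}}$. For an interior index $x=b_i$ the two relevant exponents differ by exactly $1$, so those pairs cancel; but at $x=a_1$ the exponents coming from $\tau_A\tau_B$ and from $\tau_{\{a_1,b_1\}}\tau_{B+a_2-b_1}$ differ by $m$, and likewise at $x=a_2$. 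Hence the endpoint cancellation holds only for odd $m$: for instance with $m=2$, $n=5$, $a_1=1$, $a_2=4$, $B=\{2,3,5\}$, applying the stated left-hand side to $\theta_1\theta_2\theta_3\theta_4$ gives $\pm(2\xi'_1\theta_1-2\xi'_4\theta_4)\neq 0$. So the claim that ``the $\tau_A\tau_B$ term contributes exactly the correction needed at the endpoints'' is precisely what has to be proved, and under the natural reading of the multi-contraction it fails for even $m$; to finish you must pin down the convention for $(\prod_{i\in B\setminus\{n\}}\theta_i)\odot$ and either insert the resulting parity-of-$m$ sign on the $\tau_A\tau_B$ term or otherwise account for it. (For comparison, the paper's own proof is the same pairing argument stated even more briefly, with the sign analysis likewise suppressed; the later applications, the straightening algorithm and the spanning argument, only need the relation up to such signs, but as an exact operator identity this is exactly where the work lies.)
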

\begin{proof}
For any two cyclically consecutive elements $c_1, c_2$ of $a_1, b_2, \dots, b_m ,a_2$, and any third element $c_3 \in B \cup \{a_1, a_2\}$, the terms $\xi'_{c_1} \cdot \theta_{c_2} \odot \theta_{c_3} \odot$ and $\xi'_{c_1} \cdot \theta_{c_3} \odot \theta_{c_2} \odot$ will appear in the expansion of left hand side both exactly once or both exactly twice, depending on whether $c_3$ is also cyclically consecutive with $c_1$. In either case, anticommutativity results in the sum being 0.
\end{proof}
Together these lemmas allow us to demonstrate the $\symm_{n-1}$ invariance via a combinatorial algorithm.
\begin{corollary}
\label{algorithm}
Let $\sigma \in \mathfrak{S}_{n-1}$ and let $\pi \in \Phi(n)$. Then $\sigma \cdot G_\pi$ can be expressed as a linear combination of $\{G_\pi \mid \pi \in \Phi(n)\}$ via the following algorithm:
\begin{enumerate}
\item Apply $\sigma$ to $\pi$, resulting in a set partition $\pi'$ not necessarily in $\Phi(n)$.\\
\item If $\pi'$ is contains any crossing two element blocks $\{a,c\}$, $\{b,d\}$, neither of which contain $n$, replace $\pi'$ with minus the sum of the partitions obtained by replacing $\{a,c\}, \{b,d\}$ with $\{a,b\},\{c,d\}$ and $\{a,d\}, \{b,c\}$. Repeat on each new term of the sum until all terms of the sum do not contain crossing two element blocks.\\
\item For each term of the sum obtained in step 2, replace any two element set that crosses the block containing $n$ as described by Lemma~\ref{2-cross-more-than-2}.\\
\item Replace each partition $\pi''$ in the sum obtained from step 3 with its corresponding $G_{\pi''}$ to express $\sigma \cdot G_\pi$ as a linear combination.
\end{enumerate}
\end{corollary}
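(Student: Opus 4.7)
The plan is to verify that each step of the algorithm is justified by the previously established results and that the overall procedure terminates.

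Step (1) is immediate from the preceding proposition, which gives $\sigma \cdot G_\pi = \sign(\sigma)\, G_{\sigma \cdot \pi}$; the only issue is that $\pi' = \sigma \cdot \pi$ may lie in $\Psi(n)\setminus \Phi(n)$, so we must rewrite $G_{\pi'}$ in the claimed form. For step (2), I would use Lemma~\ref{block-operator-commute} to commute the two block operators $\tau_{\{a,c\}}$ and $\tau_{\{b,d\}}$ to the outside of the product defining $G_{\pi'}$, and then apply Lemma~\ref{skein} to substitute $\tau_{\{a,c\}}\tau_{\{b,d\}} = -\tau_{\{a,b\}}\tau_{\{c,d\}} - \tau_{\{a,d\}}\tau_{\{b,c\}}$. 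This yields $G_{\pi'} = -G_{\pi'_1} - G_{\pi'_2}$ for the two ``uncrossing'' partitions. Step (3) is treated the same way, using commutativity to isolate $\tau_A \tau_B$ and then Lemma~\ref{2-cross-more-than-2} to expand it as a signed sum of terms $\tau_{A'}\tau_{B'}$ in which the new two-element block $A'$ no longer crosses the modified $B'$. Step (4) is only notational.

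The main obstacle I expect is proving termination, since step (3) can introduce new two-block/two-block crossings that must be resolved by further applications of step (2). I would define a monovariant
\[
c(\pi'') \;:=\; \#\{\text{crossing pairs of two-element blocks of } \pi''\} \;+\; \sum_E \bigl| B \cap (\min E,\, \max E) \bigr|,
\]
where the sum runs over two-element blocks $E$ of $\pi''$ and $B$ is the block containing $n$. A position-by-position case analysis establishes that every substitution in step (2) strictly decreases $c$: the internal crossing between $\{a,c\}$ and $\{b,d\}$ is removed, and neither uncrossing can increase the count of crossings with any other two-block $\{p,q\}$ nor the straddling count at any element of $B$. A similar but finer case analysis, partitioning on the position of any other two-block $\{p,q\}$ relative to the points $a_1, b_1, \dots, b_m, a_2$, shows that each substitution in step (3) also strictly decreases $c$: even when a block straddling exactly one of two consecutive $b_i, b_{i+1}$ produces a new two-block/two-block crossing after the replacement, that new crossing is exactly cancelled by a matching unit drop in the $B$-straddling count, and the removal of the $m$ crossings of $A$ with $B$ guarantees a net strict decrease.

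Since $c$ takes values in $\ZZ_{\ge 0}$ and strictly decreases with every substitution, iterating steps (2) and (3) terminates, and the only surviving partitions lie in $\Phi(n)$. Step (4) then produces the desired linear combination. The most delicate part will be the step (3) case analysis for the ``middle'' terms $\tau_{\{b_i, b_{i+1}\}}\tau_{B + a_1 + a_2 - b_i - b_{i+1}}$, where the swap $\{b_i, b_{i+1}\} \leftrightarrow \{a_1, a_2\}$ within $B$ forces a careful bookkeeping of the straddlings contributed by every other two-block.
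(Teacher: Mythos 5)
Your proposal is correct, and it follows the same route the paper intends: the corollary is asserted there as an immediate consequence of Lemma~\ref{block-operator-commute}, Lemma~\ref{skein} and Lemma~\ref{2-cross-more-than-2} (together with the equivariance $\sigma \cdot G_\pi = \mathrm{sign}(\sigma)G_{\sigma\cdot\pi}$), with no further argument given. What you add beyond the paper is the termination analysis, and it is a genuine addition rather than a redundancy: as stated, the algorithm runs step (2) to completion and then applies step (3) once, but a step-(3) substitution can in fact create new two-block/two-block crossings when several two-element blocks cross the block $B$ containing $n$ (for $n=8$, take blocks $\{3,7\}$, $\{4,6\}$, $B=\{2,5,8\}$, $1$ a labelled singleton: resolving $\{3,7\}$ against $B$ produces the block $\{3,5\}$, which crosses $\{4,6\}$), so one really must interleave steps (2) and (3) and justify that the rewriting stops. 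Your monovariant does this correctly: in step (2) the resolved pair's own crossing disappears, a case check on the position of any other chord relative to $a<b<c<d$ shows no other crossing count or $B$-straddling count increases; in step (3), applied when no two-block crossings are present, the term $|B\cap(a_1,a_2)|=m\ge 1$ drops to $0$, the end terms $\{a_1,b_1\}$ and $\{b_m,a_2\}$ create no new crossings against blocks not crossing $A$, and for the middle terms $\{b_i,b_{i+1}\}$ the only possible new crossing (with a chord straddling exactly one of $b_i,b_{i+1}$) is offset by the loss of that chord's straddling of $b_i$ or $b_{i+1}$ in the new $B'$, giving a net decrease of at least $m$. Since the monovariant is a nonnegative integer and vanishes exactly on partitions in $\Phi(n)$, the process terminates with the desired expansion; the only point worth stating explicitly in a write-up is that the substitutions of Lemmas~\ref{skein} and~\ref{2-cross-more-than-2} are performed after using Lemma~\ref{block-operator-commute} to place the relevant pair of block operators adjacent (with $\tau_A$ immediately left of $\tau_B$ in the second case), which you do note.
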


\begin{example}
Let $n=8$ and let $\sigma \in \symm_{n-1}$ be the cycle $(3576)$. Let $\pi \in \Phi(n)$ be the set partition $\{23/45/7_\theta/186\}$. An example of applying Corollary~\ref{algorithm} to this situation is given in Figure 1.

\begin{center}
\begin{figure}[h]
\caption{Applying Corollary~\ref{algorithm}}
\begin{tikzpicture}
\equicc[1 cm]{8}{0}{0}
\draw (N2)--(N3);
\draw (N4)--(N5);
\draw (N1)--(N8)--(N6)--(N1);
\draw[->] (1.5,0)--(2,0);
\node at (1.05,-1) {\scriptsize $\theta$};

\equicc[1 cm]{8}{4}{0}
\draw (N2)--(N5);
\draw (N4)--(N7);
\draw (N1)--(N3)--(N8)--(N1);
\node at (2.3,0) {$-$};
\node at (4.15,-1.4) {\scriptsize $\theta$};

\draw[->] (5.5,0)--(6,0);

\equicc[1 cm]{8}{7.5}{0}
\draw (N2)--(N4);
\draw (N1)--(N3)--(N8)--(N1);
\draw (N5)--(N7);
\node at (7.65,-1.4) {\scriptsize $\theta$};

\equicc[1 cm]{8}{11}{0}
\draw (N4)--(N5);
\draw (N2)--(N7);
\draw (N1)--(N3)--(N8)--(N1);
\node at (9.3,0) {$+$};
\node at (11.15,-1.4) {\scriptsize $\theta$};

\draw[->] (-2,-4)--(-1.5,-4);

\equicc[1 cm]{8}{0.5}{-4}
\draw (N7)--(N5);
\draw (N2)--(N3);
\draw (N1)--(N4)--(N8)--(N1);
\node at (-1.2,-4) {$-$};
\node at (.65,-5.4) {\scriptsize $\theta$};
\equicc[1 cm]{8}{4}{-4}
\draw (N2)--(N3);
\draw (N4)--(N5);
\draw (N1)--(N7)--(N8)--(N1);
\node at (2.3,-4) {$-$};
\node at (4.15,-5.4) {\scriptsize $\theta$};

\equicc[1 cm]{8}{7.5}{-4}
\draw (N7)--(N5);
\draw (N2)--(N3);
\draw (N1)--(N4)--(N8)--(N1);
\node at (5.8,-4) {$-$};
\node at (7.65,-5.4) {\scriptsize $\theta$};

\equicc[1 cm]{8}{11}{-4}
\draw (N4)--(N5);
\draw (N7)--(N3);
\draw (N1)--(N2)--(N8)--(N1);
\node at (9.3,-4) {$-$};
\node at (11.15,-5.4) {\scriptsize $\theta$};
\end{tikzpicture}
\end{figure}
\end{center}

\end{example}

\section{A combinatorial basis}
We have shown that there is a mapping of $\symm_{n-1}$-modules $\mathbb{C}\Psi(n) \rightarrow \wedge \{\Theta_{n-1}, \Xi'_{n-1}\}$. In this section we will show that the restriction of this mapping to $\mathbb{C}\Phi(n)$ is injective and becomes an isomorphism when composed with the quotient map $\wedge \{\Theta_{n-1}, \Xi'_{n-1}\} \rightarrow FDR_n$, thereby proving the following.

\begin{theorem}
\label{basis}
The set $\{ [G_\pi] \mid \pi \in \Psi(n) \}$ forms a basis for $FDR_n$, where $[f]$ denotes the equivalence class in $FDR_n$ of $f \in \wedge  \{\Theta_{n-1}, \Xi'_{n-1}\}$.
\end{theorem}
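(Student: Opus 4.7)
My plan is to prove that $\pi \mapsto [G_\pi]$ induces a vector-space isomorphism $\mathbb{C}\Phi(n) \cong FDR_n$, which is what the paragraph preceding the theorem announces (``the restriction of this mapping to $\mathbb{C}\Phi(n)$ is injective and becomes an isomorphism'') and what the stated basis assertion amounts to. The strategy is to find a distinguished complement to the ideal $\langle\alpha\rangle$ (where $\alpha=\sum_{i=1}^{n-1}\theta_i\xi'_i$) inside $\wedge\{\Theta_{n-1},\Xi'_{n-1}\}$, place every $G_\pi$ inside it, and then establish linear independence of $\{G_\pi:\pi\in\Phi(n)\}$ directly in $\wedge\{\Theta_{n-1},\Xi'_{n-1}\}$.

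\textbf{Step 1 (canonical representatives in $\ker(\alpha\odot)$).} The inner product of Section 2 makes the adjoint of left multiplication by $\alpha$ equal to the contraction operator $\alpha\odot:=\sum_k\theta_k\odot\xi'_k\odot$, so $\ker(\alpha\odot)$ is the orthogonal complement of $\langle\alpha\rangle$ and projects isomorphically onto $FDR_n$. I will show that every $G_\pi$ (in fact for all $\pi\in\Psi(n)$, not just $\Phi(n)$) lies in $\ker(\alpha\odot)$ by induction on the number of block operators applied. The base case $\alpha\odot(\theta_1\cdots\theta_{n-1})=0$ is immediate, and each block operator $\tau_B$ preserves $\ker(\alpha\odot)$ by the Leibniz-type identities $\alpha\odot(\theta_j\odot f)=\theta_j\odot(\alpha\odot f)$ and $\alpha\odot(\xi'_i\,g)=\theta_i\odot g+\xi'_i(\alpha\odot g)$. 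Consequently, linear independence of $\{[G_\pi]\}$ in $FDR_n$ reduces to linear independence of $\{G_\pi\}$ in $\wedge\{\Theta_{n-1},\Xi'_{n-1}\}$.

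\textbf{Step 2 (triangular structure).} To each $\pi\in\Phi(n)$ I attach the \emph{standard monomial} $m_\pi=\theta_{I_\pi}\xi'_{J_\pi}$, where $I_\pi$ is the union of $\theta$-singleton positions with the smaller endpoints of size-2 blocks of $\pi$, and $J_\pi$ is defined analogously using $\xi'$-singletons. Selecting the summand $\theta_i\xi'_i$ from every size-2 factor in the product formula produces $\pm m_\pi$ inside $G_\pi$ with coefficient $\pm 1$. I verify that $\pi\mapsto(I_\pi,J_\pi)$ is injective on $\Phi(n)$ via a left-to-right LIFO argument: from $(I,J)$ one reads off the openings $I\cap J$ and the labelled singletons $I\triangle J$, and the noncrossing condition forces a unique pairing of openings with closings drawn from $[n-1]\setminus(I\cup J)$. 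I then impose a partial order on $\Phi(n)$---refining the ``smaller-endpoint profile'' of the doubletons---under which the coefficient matrix $[\,\text{coeff of }m_{\pi'}\text{ in }G_\pi\,]_{\pi,\pi'\in\Phi(n)}$ is unitriangular with $\pm 1$ diagonal, giving linear independence of $\{G_\pi:\pi\in\Phi(n)\}$. Matching cardinalities $|\Phi(n)|=\dim FDR_n$, extractable from Kim--Rhoades' formula for the bigraded Frobenius image of $FDR_n$, then promotes linear independence to a basis.

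\textbf{Main obstacle.} The technical heart lies in Step 2's triangularity claim. The naive hope that $m_\pi$ appears only in $G_\pi$ fails already at $n=5$: for $\pi=\{\{5\},\{1,2\},\{3,4\}\}$ and $\pi'=\{\{5\},\{1,4\},\{2,3\}\}$, the monomial $m_\pi$ appears with coefficient $-1$ in $G_\pi$ but with coefficient $+1$ in $G_{\pi'}$, so the proper order must distinguish noncrossing matchings that share an opening set. Identifying the correct partial order on $\Phi(n)$---presumably one tracking how ``nested'' the noncrossing matching is---and verifying unitriangularity together with the sign bookkeeping in the product formula is the main combinatorial challenge; the subsequent cardinality step is comparatively routine.
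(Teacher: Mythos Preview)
Your plan is correct and complete, but it proceeds differently from the paper, and the ``main obstacle'' you flag is in fact not an obstacle at all.

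\textbf{Comparison with the paper.} The paper argues \emph{spanning} rather than linear independence. It shows that $\{G_\pi:\pi\in\Phi(n)\}$ together with $\{m\alpha:m\text{ monomial}\}$ spans $\wedge\{\Theta_{n-1},\Xi'_{n-1}\}$, by decomposing the algebra into overlapping subspaces $V_{S,m}$ (monomials of the form $\theta_{s_1}\xi'_{s_1}\cdots\theta_{s_k}\xi'_{s_k}m$ for $s_i\in S$) and checking that in each $V_{S,m}$ the two families are mutually orthogonal, each linearly independent, and of total cardinality $\dim V_{S,m}$. For the second family the paper has to import the Kim--Rhoades result that multiplication by $\alpha$ is injective. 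The dimension count $|\Phi(n)|=\dim FDR_n$ is done via an explicit bijection with Kim--Rhoades' Motzkin-path basis, which is quicker than extracting it from the Frobenius character as you propose. Your Step~1 is a cleaner version of the paper's orthogonality check: by showing $G_\pi\in\ker(\alpha\odot)=\langle\alpha\rangle^\perp$ inductively through the block operators, you get the orthogonal complement decomposition for free and never need the injectivity of $\alpha\cdot$.

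\textbf{Your obstacle dissolves under the lex order.} Take the lexicographic order on monomials with respect to $\theta_1<\xi'_1<\theta_2<\xi'_2<\cdots$. Every monomial of $G_\pi$ is obtained from $m_\pi$ by replacing some of the pairs $\theta_i\xi'_i$ (for $i$ a smaller endpoint) by $\theta_j\xi'_j$ (for $j>i$ its partner), which strictly lowers the monomial in lex. Hence $m_\pi$ is the lex-leading term of $G_\pi$, with coefficient $\pm1$. Your LIFO argument already shows $\pi\mapsto m_\pi$ is injective on $\Phi(n)$, so distinct $G_\pi$ have distinct leading terms and are linearly independent --- no subtle partial order is needed. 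In your $n=5$ example, $m_{\pi'}=\theta_1\xi'_1\theta_2\xi'_2$ is lex-larger than $m_\pi=\theta_1\xi'_1\theta_3\xi'_3$; the fact that $m_\pi$ appears in $G_{\pi'}$ is consistent with $m_{\pi'}$ being the leading term of $G_{\pi'}$, and the square matrix you describe is triangular for the total order on $\Phi(n)$ induced by lex on $\{m_\pi\}$. (This is exactly the leading-term argument the paper uses, but localized to each $V_{S,m}$.)
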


\begin{proof}
We begin with a dimension count; Kim and Rhoades \cite{KR} gave a basis of $FDR_n$ indexed by a set $\Pi(n)_{>0}$ of Motzkin-like lattice paths defined as follows. 
\begin{defn}
Let $\Pi(n)_{>0}$ be the set of all lattice paths which 
start at $(0,0)$, take steps $(1,0), (1,1)$ or $(1,-1)$, only touch the $x$-axis at $(0,0)$ and have all $(1,0)$ steps labelled by $\theta$ or $\xi'$.
\end{defn}
The two indexing sets are in bijection.
\begin{lemma}
\label{path-partition-bijection}
There is a bijection between $\Pi(n)_{>0}$ and $\Phi(n)$. 
\end{lemma}
\begin{proof}
Given a Motzkin path in $\Pi(n)_{>0}$, draw a horizontal line extending to the right of each up step until it first intersects the path again. Label each step after the first $1$ to $n-1$. Construct a set partition by placing every up step in a block with the down step it is connected to if such a down step exists, or in the block containing $n$ otherwise. Place every horizontal step in a singleton block with the same label. The process can be reversed, and is therefore a bijection. 
\end{proof}
The bijection is best described with a picture example as in Figure 2. 
\begin{center}
\begin{figure}[h]
\caption{An example of Lemma~\ref{path-partition-bijection}}
\begin{tikzpicture}[scale = .8]
    \equicc[2 cm]{8}{-5}{2.1}
\draw (N1) -- (N3) ;
\draw (N8) -- (N5) -- (N4) -- (N8);
\draw (N6) -- (N7);
\node at (-4.8,4.2) {\scriptsize $\theta$};

\draw [to-to] (-2,2.1) -- (0,2.1);

\draw [thick] (0,0) -- (1,1) -- (2,2) -- (3,2) -- (4,1) -- (5,2) -- (6,3) -- (7,4) -- (8,3);
\draw [dotted] (0,0) -- (8,0);
\draw (.5, .5) -- (8,.5);
\draw (1.5,1.5) -- (3.5,1.5);
\draw (4.5,1.5) -- (8,1.5);
\draw (5.5, 2.5) -- (8,2.5);
\draw (6.5, 3.5) -- (7.5, 3.5);
\node at (1.5,2) {1};
\node at (2.5,2.5) {2};
\node at (2.7,2.3) {\scriptsize $\theta$};
\node at (3.5,2) {3};
\node at (4.5,2) {4};
\node at (5.5,3) {5};
\node at (6.5,4) {6};
\node at (7.5,4) {7};
\end{tikzpicture}
\end{figure}
\end{center}

Therefore it suffices to show that $\{ [G_\pi] \mid \pi \in \Psi(n) \}$ spans. By Corollary~\ref{algorithm} and since $FDR_n$ is defined as a quotient, it suffices to show that together, the sets
\[
\beta := \{ G_\pi \mid \pi \in \Psi(n) \}
\] and
\[ \beta' := \{m(\theta_1\xi'_1 + \cdots + \theta_{n-1}\xi'_{n-1}) \mid m \textrm{ a monomial in } \wedge \{\Theta_{n-1}, \Xi'_{n-1}\}\}
\] span
\[
 \wedge \{\Theta_{n-1}, \Xi'_{n-1}\}. 
\]
To show that $\beta \cup \beta'$ spans, we will break $\wedge \{\Theta_{n-1}, \Xi'_{n-1}\}$ into many subspaces and show that each subspace is spanned.

Let $S$ be a subset of $[n-1]$ of size $2k$ for some integer $k$. Let $m$ denote a fixed monomial of $\wedge \{\Theta_{n-1}, \Xi'_{n-1}\}$ such that the following conditions hold for all $i \in [n-1]$:
\begin{enumerate}
\item If $i \in S$, then neither $\xi'_i$ nor $\theta_i$ appears in $m$.
\item If $\xi'_i$ appears in $m$, then $\theta_i$ does not appear in $m$.
\end{enumerate}
Let $V_{S,m}$ denote the subspace of $\wedge \{\Theta_{n-1}, \Xi'_{n-1}\}$ spanned by monomials of the form $\theta_{s_1}\xi'_{s_1} \cdots \theta_{s_k}\xi'_{s_k}m$, where $s_1, \dots, s_k \in S$. There are $\binom{2k}{k}$ such monomials, so 
\begin{equation}
\textrm{dim}(V_{S,m}) = \binom{2k}{k}.
\end{equation} Consider the set of $\beta \cap V_{S,m}$. These will consist of all $\pi \in \Psi(n)'$ such that the size 1 parts of $\pi$ and their labels correspond exactly with the monomial $m$, and the size two parts partition $S$. This set is therefore in bijection with noncrossing perfect matchings of $S$, so we have
\begin{equation}
| \beta \cap V_{S,m}| = \textrm{Cat}(k)
\end{equation}
 where $\textrm{Cat}(k)$ is the $k$th Catalan number. Consider as well the set $\beta' \cap V_{S,m}$. If $m'$ is a degree $n-3$ monomial such that $(\theta_1\xi'_1 + \cdots + \theta_{n-1}\xi'_{n-1})m' \in V_{S,m}$, then it must be the case that $m' = \theta_{s_1}\xi'_{s_1}\cdots\theta_{s_{k-1}}\xi'_{s_{k-1}}m$ for some choice of $s_1, \dots, s_{k-1} \in S$. So we have
\begin{equation}
| \beta' \cap V_{S,m}| = \binom{2k}{k-1}.
\end{equation}
Putting the above equations together we have
\begin{equation}
|(\beta \cup \beta') \cap V_{S,m} | = \textrm{Cat}(k) + \binom{2k}{k-1} = \binom{2k}{k} = \textrm{dim}(V_{S,m})
\end{equation}
and so it suffices to show that $(\beta \cup \beta') \cap V_{S,m}$ is a linearly independent set.

Let $d$ be the degree of $m$, and let $M$ be the set of monomials of degree $d+2k$ in $\wedge \{\Theta_{n-1}, \Xi'_{n-1}\}$ whose variables are in increasing numerical order with $\theta_1 < \xi_1 < \cdots < \theta_n < \xi_n$. Define an inner product $ \langle - , - \rangle $ on the degree $d+2k$ part of $\wedge \{\Theta_{n-1}, \Xi'_{n-1}\}$ such that $M$ is an orthonormal set. With respect to this inner product, 
\begin{equation}
 \beta \cap V_{S,m} \subseteq (\beta' \cap V_{S,m})^\perp
\end{equation} To see this, suppose that $f_\pi \in V_{S,m}$ and $(\theta_1\xi'_1 + \cdots + \theta_{n-1}\xi'_{n-1})m' \in V_{S,m}$ have monomials in common. Then $m'$ must be equal to $ \theta_{s_1}\xi'_{s_1}\cdots\theta_{s_{k-1}}\xi'_{s_{k-1}}m'$ where each $s_i$ is in a distinct size 2 part of $\pi$. If this is the case, then $ \theta_{s_1}\xi'_{s_1}\cdots\theta_{s_{k-1}}\xi'_{s_{k-1}}m'$ and $f_\pi$ share exactly two monomials, corresponding to the two elements in the last size 2 part of $\pi$. These monomials will have coefficients of opposite sign in $f_\pi$ and the same sign in $ \theta_{s_1}\xi'_{s_1}\cdots\theta_{s_{k-1}}\xi'_{s_{k-1}}m'$, so the inner product will be 0. Therefore it suffices to show that $\beta \cap V_{S,m}$ and $\beta' \cap V_{S,m}$ are both individually linearly independent sets. 

To see that $\beta \cap V_{S,m}$ is linearly independent, consider the lexicographic term order on monomials with respect to the variable order $\theta_1, \xi'_1, \theta_2, \xi'_2, \dots$. With respect to this order, the leading term of $f_\pi$ is $\theta_{s_1}\xi'_{s_1}\cdots\theta_{s_k}\xi'_{s_k}m$, where $s_1, \dots, s_k$ are the numerically smaller elements of each size two block of $\pi$. Since $\pi$ is noncrossing and $m$ and $S$ determine the singletons and block containing $n$, specifying the set of elements that are the smaller of their part uniquely determines $\pi$. Therefore the $f_\pi$ contained in $V_{S,m}$ all have unique leading terms and are therefore linearly independent.

Kim and Rhoades proved \cite{KR} that in $FDR_n$, multiplication by $\theta_1\xi'_1 + \cdots + \theta_{n-1}\xi'_{n-1}$ is an injection, so $\beta' \cap V_{S,m}$ is also a linearly independent set and $V_{S,m}$ is spanned by $(\beta \cup \beta') \cap V_{S,m}$. Since every monomial is contained in some $V_{S,m}$, we therefore have that $\wedge \{\Theta_{n-1}, \Xi'_{n-1}\}$ is spanned by $\beta \cup \beta'$ and therefore $\{ [G_\pi] \mid \pi \in \Psi(n) \}$ is a basis for $FDR_n$ as desired.
\end{proof}

\section{$\mathfrak{S}_{n-1}$ module structure}
\label{structure}
In this section we will describe the Frobenius image of each bigraded piece of $FDR_n$ as an $\mathfrak{S}_{n-1}$ module. Consider the family of subspaces:
\begin{equation}
V(n,k,x,y) := \textrm{span} \{ [G_\pi] \mid \pi \in \Phi(n, k , x ,y ) \} \subseteq FDR_n
\end{equation}
These subspaces are in fact submodules of $\textrm{Res}^{\mathfrak{S}_n}_{\mathfrak{S}_{n-1}}(FDR_n)$, since they are closed under the action of $\mathfrak{S}_{n-1}$. To see this, note that no step of the algorithm described in Corollary~\ref{algorithm} replaces a set partition with one with a different number of size two blocks, $\xi'$-labelled elements, or $\theta$-labelled elements. Since $\Phi(n) = \oplus_{k,x,y} \Phi(n,k,x,y)$ the subspaces $V(n,k,x,y)$ make up all of $FDR_n$:
\begin{proposition}
\label{fdrsum}
The $i,j$-graded piece of $DR_n$ is a direct sum of $V(n,k,x,y)$:
\[
(FDR_n)_{i,j} = \bigoplus_{\substack{k,x,y\\k+x = i \\ k+y = j}} V(n,k,x,y)
\]
\end{proposition}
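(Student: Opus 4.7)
The plan is to deduce Proposition~\ref{fdrsum} as a direct corollary of Theorem~\ref{basis} together with a routine bidegree calculation.

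First, observe that the indexing set partitions as a disjoint union
\[
\Phi(n) = \bigsqcup_{k,x,y} \Phi(n,k,x,y)
\]
by its defining statistics. Since Theorem~\ref{basis} provides $\{[G_\pi] : \pi \in \Phi(n)\}$ as a basis of $FDR_n$, we immediately obtain the vector space decomposition $FDR_n = \bigoplus_{k,x,y} V(n,k,x,y)$. What remains is to identify the bigraded piece in which each summand lives.

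Second, I would check that for every $\pi \in \Phi(n,k,x,y)$ the element $G_\pi \in \wedge\{\Theta_{n-1}, \Xi'_{n-1}\}$ is bihomogeneous of bidegree $(k+x, k+y)$. Because the ideal $\langle \theta_1\xi'_1 + \cdots + \theta_{n-1}\xi'_{n-1}\rangle$ is generated by a bihomogeneous element, the quotient map is bihomogeneous, so it suffices to track the bidegree of $G_\pi$ in the ambient exterior algebra. The argument is purely bookkeeping using the definitions of the block operators: starting from $\theta_1\cdots\theta_{n-1}$, the block operator attached to the block containing $n$ contracts away several $\theta$'s via exterior differentiation; each size-$2$ block operator exchanges one $\theta$ for one $\xi'$; each $\theta$-singleton operator is the identity; and each $\xi'$-singleton operator exchanges a $\theta$ for a $\xi'$. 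Since these contributions depend only on the statistics $(k,x,y)$ and not on the specific partition $\pi$, every $G_\pi$ with $\pi \in \Phi(n,k,x,y)$ has the same bidegree, and a tally yields $(k+x, k+y)$.

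Restricting the global decomposition to a fixed bidegree $(i,j)$ then selects exactly the triples $(k,x,y)$ with $k+x=i$ and $k+y=j$, giving the identity asserted by the proposition. The argument has no substantial obstacle; it is a direct consequence of the basis theorem combined with degree tracking.
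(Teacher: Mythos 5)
Your proposal is correct and follows essentially the same route as the paper: the paper's proof is exactly the observation that, by the definition of $G_\pi$ (via the block operators), each $G_\pi$ with $\pi \in \Phi(n,k,x,y)$ is bihomogeneous of bidegree $(k+x,k+y)$, combined with Theorem~\ref{basis} and the disjointness of the sets $\Phi(n,k,x,y)$. Your write-up merely spells out the degree bookkeeping and the homogeneity of the quotient map in more detail than the paper does.
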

\begin{proof}
From the definition of $G_\pi$ it is clear that if $\pi \in \Phi(n,k,x,y)$ then $G_\pi$ has bidgree $(k+x,k+y)$. The result follows.
\end{proof}

To determine the structure of these modules we begin with $V(n,k,0,0)$. We first need a lemma
\begin{lemma}
\label{bijection}
There exists a bijection from $\Phi(n,k,0,0)$ to $SYT(n-k-1,k)$, the set of standard Young tableau of shape $\lambda = (n-k-1, k)$. 
\end{lemma}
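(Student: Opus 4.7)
The plan is to construct an explicit bijection. Given $\pi \in \Phi(n,k,0,0)$, define a tableau $T_\pi$ of shape $(n-k-1,k)$ by reading $i = 1, 2, \ldots, n-1$ in order and placing $i$ in the first row if $i$ lies in the block containing $n$ or is the smaller element of a size-two block, and in the second row if $i$ is the larger element of a size-two block. A count of the three classes of elements in $[n-1]$ (elements in the block with $n$, smaller elements of pairs, larger elements of pairs) against the parameters of $\Phi(n,k,0,0)$ shows that the row lengths come out to $n-k-1$ and $k$, matching the shape. The rows are strictly increasing by construction, and the column condition reduces to a ballot-style inequality: in every prefix $[1,i]$ the number of row-1 entries is at least the number of row-2 entries. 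This holds because the smaller element of each size-two block precedes its larger element, and elements of the block containing $n$ contribute only to row 1.

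For the inverse, given a SYT $T$ of shape $(n-k-1,k)$, I reconstruct $\pi$ by the classical stack-based matching: process $i = 1, 2, \ldots, n-1$ in order, pushing $i$ onto a stack if $i$ lies in row 1 of $T$, and popping the stack top $j$ to form the pair $\{j, i\}$ if $i$ lies in row 2. The ballot condition on $T$ guarantees the stack is nonempty whenever popping is required, so this is well defined. When the process terminates, the elements remaining on the stack, together with $n$, are declared to form the block containing $n$. The forward and inverse maps are mutual inverses by construction.

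The main obstacle is verifying that the partition produced by the inverse is noncrossing. The size-two blocks are pairwise noncrossing by the standard LIFO argument for bracket matchings. To rule out a crossing between the block containing $n$ and a size-two block $\{a,b\}$ with $a < b$, observe that at the moment $b$ is processed, $a$ must be the top of the stack; hence every element pushed strictly between the push of $a$ and the pop at $b$ has already been popped, so no residual stack element $s$ can lie in $(a,b)$. Consequently $s < a$ or $s > b$ for each residual $s$, and the arc joining $s$ to $n$ does not cross $(a,b)$; the same reasoning applied pairwise to residual stack elements shows that the entire block containing $n$ fails to cross any size-two block. This completes the verification and establishes the bijection.
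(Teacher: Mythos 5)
Your map coincides with the paper's (second row of the tableau $=$ the set of larger elements of the size-two blocks), and most of your verification is sound: the shape count (reading $k$ as the number of size-two blocks, which is how the paper actually uses this parameter in its own proof), the ballot condition showing $T_\pi$ is a valid element of $\SYT(n-k-1,k)$, the well-definedness of the stack-based inverse, and the noncrossing check for the partition it produces, including the key point that no residual stack element can lie strictly between $a$ and $b$.

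The gap is the sentence ``the forward and inverse maps are mutual inverses by construction.'' Only one composition is immediate: applying the forward map to the partition built from $T$ returns $T$, because the pairs produced by the stack algorithm have exactly the row-$2$ entries of $T$ as their larger elements. The other composition is not by construction: applying the stack algorithm to $T_\pi$ pairs each larger element $b$ with the current stack top, and you never argue that this top is $b$'s actual partner $a$ in $\pi$. A priori the pairing in $\pi$ could differ from the LIFO pairing with the same set of larger elements; indeed, on arbitrary (crossing) partitions the forward map is not injective, e.g. $\{1,3\},\{2,4\}$ and $\{1,4\},\{2,3\}$ have the same larger-element set, so the noncrossing hypothesis must be invoked here and your write-up never uses it in this direction. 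As written you only obtain surjectivity of the forward map, i.e. $|\Phi(n,k,0,0)| \geq |\SYT(n-k-1,k)|$. The fix is short and is precisely the step the paper spells out (injectivity of its map $g$): if $\{a,b\}$ is a block of $\pi$ with $a<b$, then any element of the block containing $n$ lying in $(a,b)$, or any smaller-element-of-a-pair in $(a,b)$ whose partner exceeds $b$, would cross $\{a,b\}$; hence every element pushed after $a$ has already been popped when $b$ is processed, so $a$ is the stack top and the algorithm pairs $b$ with $a$, recovering $\pi$. With that paragraph added, your argument is complete and is essentially the paper's proof with the inverse made explicit.
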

\begin{proof}
Define a function $g: \Phi(n,k,0,0) \rightarrow \binom{[n-1]}{[k]}$ by
\begin{equation}
g(\pi) = \{i \in [n-1] \mid i \textrm{ is in a block of size 2, and is the larger element in its block}.\}
\end{equation}
For example, $g(14/23/78/569) = \{3,4,8\}$. Then $g$ is injective, it is possible to recover the preimage of a set $S$ under $g$ by starting with the smallest $i$ element of $S$, if $g(\pi) = S$, then for $\pi$ to satisfy the noncrossing condition, $\{i-1, i\}$ must be a block of $\pi$. Then the next smallest element of $S$ must be paired with the largest element smaller than it that is not already paired, and so on. This algorithm will produce a unique preimage iff $S$ satisfies the condition that for any $k \in [n-1]$, $|S \cap [k]| \leq k/2$.
Define another function $h: SYT(n-k-1, k) \rightarrow \binom{[n-1]}{k}$ by
\begin{equation}
h(T) = \{ i \in [n-1] \mid i \textrm{ is in the second row of } T\}
\end{equation}
Then $h$ is also injective, and $S \in h(SYT(n-k-1,k))$ iff $S$ satisfies the condition that for any $k \in [n-1]$, $|S \cap [k]| \leq k/2$. So the image of $h$ and $g$ are the same and the result follows.
\end{proof}
\begin{proposition}
We have that $V(n,k,0,0) \cong_{\mathfrak{S}_{n-1}} S^{(n-k-1, k)}$. 
\end{proposition}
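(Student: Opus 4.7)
The plan is to combine a dimension count with the criterion of Lemma~\ref{symmetrizationkills}. By Lemma~\ref{bijection}, $\dim V(n,k,0,0) = |\Phi(n,k,0,0)| = |SYT(n-k-1,k)| = \dim S^{(n-k-1,k)}$, so to prove $V(n,k,0,0) \cong S^{(n-k-1,k)}$ as $\mathfrak{S}_{n-1}$-modules it suffices to exhibit an embedding $S^{(n-k-1,k)} \hookrightarrow V(n,k,0,0)$.

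Set $\lambda = (n-k-1,k)$ with conjugate $\lambda' = (2^k, 1^{n-2k-1})$, and observe that $\lambda$ is the unique partition $\mu \vdash n-1$ with both $\lambda \preceq \mu$ and $\mu \preceq \lambda$. By Lemma~\ref{symmetrizationkills}, the symmetrizer $[\mathfrak{S}_\lambda]_+$ annihilates every $S^\mu$ with $\lambda \not\preceq \mu$, and the antisymmetrizer $[\mathfrak{S}_{\lambda'}]_-$ annihilates every $S^\mu$ with $\mu \not\preceq \lambda$. Consequently, for the tableau $T$ of shape $\lambda$ whose rows are the intervals $\{1,\ldots,n-k-1\}$ and $\{n-k,\ldots,n-1\}$, the Young symmetrizer $e_T := [\mathfrak{S}_{\lambda'}]_- \cdot [\mathfrak{S}_\lambda]_+$ (with column stabilizer $\mathfrak{S}_{\lambda'} = \mathfrak{S}_{\{1,n-k\}} \times \cdots \times \mathfrak{S}_{\{k,n-1\}}$) projects any element of $V(n,k,0,0)$ onto its $S^{\lambda}$-isotypic component. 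Hence producing a single $v \in V(n,k,0,0)$ with $e_T \cdot v \neq 0$ forces $S^{(n-k-1,k)} \hookrightarrow V$ and, together with the dimension match, completes the proof.

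The candidate I would test is $v = [G_{\pi_0}]$, where $\pi_0 = \{1,2\}\cup\{3,4\}\cup\cdots\cup\{n-k-1,n-k\}\cup\{n-k+1,\ldots,n\}$ is the staircase noncrossing partition; its product-formula expression is $G_{\pi_0} = \pm\prod_{i=1}^{(n-k)/2}(\theta_{2i-1}\xi'_{2i-1} - \theta_{2i}\xi'_{2i})$. The main obstacle is verifying $e_T \cdot [G_{\pi_0}] \neq 0$: each transposition in $[\mathfrak{S}_\lambda]_+$ or $[\mathfrak{S}_{\lambda'}]_-$ unfolds, via Corollary~\ref{algorithm}, into a signed sum of basis elements $[G_\pi]$ through repeated applications of the skein relation (Lemma~\ref{skein}) and the crossing relation (Lemma~\ref{2-cross-more-than-2}), with substantial potential cancellation. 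I would address this with a leading-term argument in the spirit of the proof of Theorem~\ref{basis}: lift everything to $\wedge\{\Theta_{n-1}, \Xi'_{n-1}\}$, pick the lexicographic monomial order with respect to $\theta_1, \xi'_1, \theta_2, \xi'_2, \ldots$ (the same order used there to prove linear independence of $\{G_\pi\}$), and identify a single basis element $[G_{\pi^*}]$ whose coefficient in $e_T \cdot G_{\pi_0}$ is controlled through its unique leading monomial, the nonvanishing of that monomial modulo the ideal $\langle\theta_1\xi'_1 + \cdots + \theta_{n-1}\xi'_{n-1}\rangle$ yielding the conclusion.
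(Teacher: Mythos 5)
Your logical frame is fine: the dimension count via Lemma~\ref{bijection} plus the fact that $e_T\cdot v\neq 0$ for a Young symmetrizer $e_T$ of shape $\lambda=(n-k-1,k)$ forces an embedding $S^{\lambda}\hookrightarrow V(n,k,0,0)$, and equality of dimensions then finishes. (One caveat of precision: $e_T$ is not a projection onto the $S^{\lambda}$-isotypic component --- it is a quasi-idempotent whose nonzero image generates a single copy of $S^{\lambda}$ --- but the way you actually use it is valid.) This is a genuinely different division of labor from the paper, which never touches the full Young symmetrizer: it shows that the row symmetrizer $[\mathfrak{S}_{\lambda}]_+$ does not kill $V(n,k,0,0)$ and that $[\mathfrak{S}_{\mu}]_+$ kills it for every $\mu\succ\lambda$, the latter being almost free because any $\pi\in\Phi(n,k,0,0)$ has two elements of $\{1,\dots,n-m\}$ in a common block, so the corresponding transposition fixes $\pi$ and acts by $-1$ on $G_{\pi}$. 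Your route trades that cheap ``killing'' step for a much harder nonvanishing statement.

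And that harder statement is exactly where the gap is. First, the verification $e_T\cdot[G_{\pi_0}]\neq 0$ is only announced as a plan; the leading-term device from the proof of Theorem~\ref{basis} separates distinct $G_{\pi}$'s from one another, but it does not by itself control cancellation among the $2^k\,(n-k-1)!\,k!$ signed terms of $e_T G_{\pi_0}$, and with the extra column antisymmetrizer the bookkeeping is strictly worse than what the paper faces. The paper's resolution is a concrete sign-coherence argument: with only $[\mathfrak{S}_{\lambda}]_+$ applied to a well-chosen nested partition, every term contributing to the coefficient of the single monomial $\theta_{n-1}\cdots\theta_{n-k}\xi'_{n-1}\cdots\xi'_{n-k}$ contributes with the same sign; nothing of that sort is identified in your sketch. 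Second, your test element is the wrong one: throughout this section $k$ counts the two-element blocks of $\pi$ (this is what makes the bijection with $SYT(n-k-1,k)$, the bidegree $(k,k)$ of $G_{\pi}$, and the target $S^{(n-k-1,k)}$ consistent), so a valid candidate must have $k$ blocks of size two and the block of $n$ of size $n-2k$ --- for instance the paper's $\pi_0$ with blocks $\{n-1,n-2k\},\{n-2,n-2k+1\},\dots,\{n-k,n-k-1\},\{1,\dots,n-2k-1,n\}$, whose larger pair-elements are exactly the second row $\{n-k,\dots,n-1\}$ of your tableau $T$. Your staircase partition has $(n-k)/2$ two-element blocks (not even an integer in general) and lies in bidegree $((n-k)/2,(n-k)/2)$, hence not in $V(n,k,0,0)$; even granting the ambiguity in the paper's literal definition of $\Psi(n,k)$, the computation as set up would be carried out in the wrong module.
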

\begin{proof}
Let $\lambda = (n-k-1,k)$. By Theorem~\ref{basis} and Lemma~\ref{bijection}, the dimensions of the modules agree, so by Lemma~\ref{symmetrizationkills} it suffices to show that $[\mathfrak{S}_\lambda]_+$ does not kill $V(k,0,0)$, but $ [\mathfrak{S}_\mu]_+$ does kill $V(n,k,0,0)$ for all partitions $\mu \succ \lambda$.

We begin by showing that $[\mathfrak{S}_\lambda]_+$ does not kill $V(k,0,0)$. Let $\pi_0 \in \Phi(n,k,0,0)$ be the parition whose blocks are
\[
\{n-1,n-2k\}, \{n-2,n-2k+1\} \{n-3,n-2k+2\}, \dots, \{n-k, n-k-1\}, \{1,2,3, \dots, n-2k-1, n\}
\]
Then using the block operator definition of $F_{\pi_0}$ we have
\begin{equation}
\label{idk}
[\mathfrak{S}_\lambda]_+ F_{\pi_0} = \sum_{\sigma \in \mathfrak{S}_\lambda} \sigma \cdot \tau_{\{n-1, n-2k\}} \cdots \tau_{\{n-k, n-k-1\}} \tau_{\{1,2,3,\dots, n-2k-1, n\}} \theta_{1}\cdots \theta_{n-1}
\end{equation}
Consider the coefficient of $\theta_{n-1} \cdots \theta_{n-k}\xi'_{n-1} \cdots \xi'_{n-k}$ in the above expression. For a term to contribute to this coefficient, it must be the case that $\sigma \cdot \{1,2,3, \dots, n-2k-1, n\}  = \{1,2,3, \dots, n-2k-1, n\}$. If this is the case, then the summand corresponding to $\sigma$ can be written as
\begin{equation}
\label{sigmaprime}
 \tau_{\{n-1, \sigma'(n-2k)\}} \cdots \tau_{\{n-k, \sigma'(n-k-1)\}}  \theta_{n-2k}\cdots \theta_{n-1}
\end{equation}
for some permutation $\sigma'$ of $\{n-k-1, n-2k\}$. The coefficient of $\theta_{n-1} \cdots \theta_{n-k}\xi'_{n-1} \cdots \xi'_{n-k}$ in equation~\ref{sigmaprime} above does not depend on $\sigma'$, so all terms of the sum in equation~\ref{idk} which contribute to the coefficient of $\theta_{n-1} \cdots \theta_{n-k}\xi'_{n-1} \cdots \xi'_{n-k}$ contribute the same sign, and thus the coefficient of $\theta_{n-1} \cdots \theta_{n-k}\xi'_{n-1} \cdots \xi'_{n-k}$ in $[\mathfrak{S}_\lambda]_+ F_{\pi_0}$ is nonzero. Thus $V(k,0,0)$ is not killed by $[\mathfrak{S}_\lambda]_+$.

Now let $\mu$ be any partition of $n-1$ such that $\lambda \succ \mu$, i.e. $\mu = (n-m, m-1)$ for any $m\leq k$. Let $\pi \in \Phi(n,k,0,0)$. Since $m-1 < k$, there must be at least two elements of $i$ and $j$ of $[n-m]$ in the same block in $\pi$. Then the transposition $(i, j)$ acts on $G_\pi$ via multiplication by $-1$, so $(1 + (i, j)) G_\pi = 0$. But $ [\mathfrak{S}_\lambda]_+ = A(1+(i,j)$ for some symmteric group algebra element $A$, so indeed $[\mathfrak{S}_\lambda]_+ G_\pi = 0$, and the result follows.
\end{proof}
We can use $V(n,k,0,0)$ to determine the structure of $V(n,k,x,y)$ for any $x,y$. 
\begin{proposition}
\label{vfrob}
We have that 
\[
V(n,k,x,y) \cong_{\mathfrak{S}_{n-1}} \mathrm{Ind}_{\mathfrak{S}_{n-x-y-1} \otimes \mathfrak{S}_{x} \otimes \mathfrak{S}_{y}}^{\mathfrak{S}_{n-1}} S^{(n-x-y-k-1, k)} \otimes \mathrm{sign}_{\mathfrak{S}_x} \otimes \mathrm{sign}_{\mathfrak{S}_y}.
\]
\end{proposition}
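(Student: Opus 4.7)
The plan is to realize $V(n,k,x,y)$ as a module induced up from an explicit Young subgroup of $\mathfrak{S}_{n-1}$ that stabilizes the locations of the singleton blocks, with the preceding proposition supplying the ``base case'' when there are no singletons. For each ordered pair of disjoint subsets $T, X \subseteq [n-1]$ with $|T| = x$, $|X| = y$, let $U_{T,X}$ denote the span of the classes $[G_\pi]$ for $\pi \in \Phi(n,k,x,y)$ whose $\theta$-singletons occupy $T$ and whose $\xi'$-singletons occupy $X$. Theorem~\ref{basis} immediately supplies a direct sum decomposition $V(n,k,x,y) = \bigoplus_{(T,X)} U_{T,X}$ at the level of vector spaces.

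The first key observation is that every step of the algorithm in Corollary~\ref{algorithm} preserves the singleton data of a partition: Steps~2 and~3 only manipulate size-two blocks (or trade endpoints between a size-two block and the block containing $n$), while Step~1 sends a singleton of label $\ell$ at position $i$ to a singleton of label $\ell$ at position $\sigma(i)$. Hence $\sigma \cdot U_{T,X} \subseteq U_{\sigma(T), \sigma(X)}$ for all $\sigma \in \mathfrak{S}_{n-1}$; the action of $\mathfrak{S}_{n-1}$ on the index set of pairs $(T,X)$ is transitive; and the stabilizer of a chosen pair $(T_0, X_0)$ is the Young subgroup
\[
H := \mathfrak{S}_{[n-1]\setminus T_0 \setminus X_0} \times \mathfrak{S}_{T_0} \times \mathfrak{S}_{X_0} \cong \mathfrak{S}_{n-x-y-1} \times \mathfrak{S}_x \times \mathfrak{S}_y.
\]
The standard recognition of induced modules then gives $V(n,k,x,y) \cong \mathrm{Ind}_H^{\mathfrak{S}_{n-1}} U_{T_0, X_0}$.

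What remains is to identify $U_{T_0, X_0}$ as an $H$-module. A transposition inside $\mathfrak{S}_{T_0}$ swaps two indistinguishable $\theta$-labelled singletons and hence fixes $\pi$ as a partition, so the equivariance identity $\sigma \cdot G_\pi = \mathrm{sign}(\sigma) G_{\sigma \cdot \pi}$ forces it to act as $-1$ on $U_{T_0, X_0}$; the same holds for $\mathfrak{S}_{X_0}$, so both of these factors act via their sign representations. For the remaining factor $\mathfrak{S}_{[n-1]\setminus T_0 \setminus X_0}$, the order-preserving relabeling $[n-1]\setminus T_0\setminus X_0 \to [n-x-y-1]$ (with $n$ playing the role of the new maximum) induces a bijection between partitions in $\Phi(n,k,x,y)$ with singleton data $(T_0, X_0)$ and partitions in $\Phi(n-x-y, k, 0, 0)$. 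Up to the fixed factor $\left(\prod_{i \in T_0} \theta_i\right)\left(\prod_{j \in X_0} \xi'_j\right)$ and a global sign, the two families of $G_\pi$'s are identified, yielding $U_{T_0, X_0} \cong V(n-x-y, k, 0, 0) \otimes \mathrm{sign}_{\mathfrak{S}_x} \otimes \mathrm{sign}_{\mathfrak{S}_y}$ as $H$-modules. Combining this with the preceding proposition that $V(n-x-y, k, 0, 0) \cong S^{(n-x-y-k-1, k)}$ completes the argument.

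The main obstacle lies in this last identification: one must check that the block operators $\tau_B$ attached to size-two or $n$-containing blocks $B \subseteq ([n-1]\setminus T_0 \setminus X_0) \cup \{n\}$, together with the straightening relations of Lemmas~\ref{skein} and~\ref{2-cross-more-than-2}, descend cleanly through the frozen singleton factors. The commuting relation in Lemma~\ref{block-operator-commute} handles most of this, but some care is needed to bookkeep the signs introduced by the exterior-differentiation $\odot$ and multiplication operations used to build the $\tau_B$, and to ensure that the sign attached to the global factor does not depend on the choice of $\pi$ (only on $(T_0, X_0)$), so it can indeed be pulled out as a scalar when comparing the two $H$-actions.
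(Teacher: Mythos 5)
Your argument is essentially the paper's own proof, written out in more detail: the paper likewise encodes each $\pi \in \Phi(n,k,x,y)$ by its singleton data together with the reduced (singleton-free) partition, records how transpositions act on this data --- sign for transpositions inside the $\theta$-singleton set or the $\xi'$-singleton set, the reduced-partition action for transpositions in the complement, permutation of singleton positions otherwise --- and then appeals to the recognition of induced modules, exactly as you do with the decomposition $V(n,k,x,y)=\bigoplus_{(T,X)}U_{T,X}$, the transitivity of $\mathfrak{S}_{n-1}$ on the singleton data, and the stabilizer $H$.

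The one thing to repair is the ``main obstacle'' in your last paragraph: the sign relating $G_\pi$ to (frozen singleton monomial)$\,\cdot\,G_{\pi'}$ is \emph{not} independent of $\pi$, so it cannot be pulled out as a single scalar. For example, with $n=6$, $T_0=\{3\}$, $X_0=\emptyset$, the partition with blocks $\{2,4\},\{3\}_\theta,\{1,5,6\}$ has $G_\pi=-(\theta_2\xi'_2-\theta_4\xi'_4)\,\theta_3$, while the partition with blocks $\{1,2\},\{3\}_\theta,\{4,5,6\}$ has $G_\pi=+(\theta_1\xi'_1-\theta_2\xi'_2)\,\theta_3$: a size-two block straddling a singleton position changes the inversion count by an odd amount. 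Fortunately no uniformity is needed. Every $G_\pi$ with singleton data $(T_0,X_0)$ lies in $W\cdot m_0$, where $W$ is the exterior subalgebra on the variables indexed by $[n-1]\setminus(T_0\cup X_0)$ and $m_0$ is the product of the frozen singleton variables in a fixed order; the linear map $w\,m_0\mapsto w$, followed by the order-preserving relabeling, is equivariant for $\mathfrak{S}_{[n-1]\setminus T_0\setminus X_0}$ (which fixes $m_0$) and carries the basis $\{G_\pi\}$ bijectively onto $\{\pm G_{\pi'}\}$; a signed basis is still a basis, so $U_{T_0,X_0}\cong V(n-x-y,k,0,0)$ as $\mathfrak{S}_{n-x-y-1}$-modules without any sign bookkeeping, and the $\mathfrak{S}_{T_0}\times\mathfrak{S}_{X_0}$ factors act by sign as you observed (since these transpositions fix $\pi$, they act by the scalar $-1$ on all of $U_{T_0,X_0}$). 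One can carry this out in $\wedge\{\Theta_{n-1},\Xi'_{n-1}\}$ rather than in the quotient, since by Corollary~\ref{algorithm} the span of the $G_\pi$ is already $\mathfrak{S}_{n-1}$-stable there and maps isomorphically onto its image in $FDR_n$ by Theorem~\ref{basis}. With that adjustment your proof is complete and coincides with the paper's.
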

\begin{proof}
We can represent an element $\pi$ of $\Phi(n,k,x,y)$ by the triple $(X,Y,\pi')$, where $X$ is the set of singletons labelled by $\xi'$, $Y$ is the set of singletons labelled by $\theta$, and $\pi'$ is the set partition obtained by removing all singletons from $\pi$ and decrementing indices. Let $F_{(X,Y,\pi')}$ denote $G_\pi$ for the corresponding $\pi$. The action of a transposition $(i,j)$ on $F_{(X,Y, \pi')}$ is then given by 
\begin{equation}
(i,j) \circ F_{(X,Y, \pi')} = \begin{cases} -F_{(X,Y,\pi')} & \{i,j\} \subset X \textrm{ or } \{i,j\} \subset Y \\ F_{(X,Y, (i,j) \circ \pi')} & \{i,j\} \subset (X \cup Y)^c \\ F_{(i,j) \circ X, (i,j) \circ Y, \pi'} \textrm{ otherwise }\end{cases}
\end{equation}
The proposition follows from the definition of induced representation.
\end{proof}
\begin{corollary}
\label{frobimg}
The Frobenius image of $V(n,k,x,y)$ is given by $s_{(n-x-y-k-1, k)}s_{(1^x)}s_{(1^y)}$. The Frobenius image of $(FDR_n)_{i,j}$ is 
\[
\sum_{\substack{k,x,y \\ k+x = i \\ k+y = j}} s_{(n-x-y-k-1, k)}s_{(1^x)}s_{(1^y)}
\]
\end{corollary}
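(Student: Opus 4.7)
The statement is a direct corollary of the two preceding results (Propositions~\ref{fdrsum} and~\ref{vfrob}), combined with the standard dictionary between induction/tensor products of $\symm_n$-representations and products of symmetric functions. So the proof is essentially a one-line verification, and my plan is to spell out that verification and then assemble the pieces.

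First I would handle the Frobenius image of $V(n,k,x,y)$. By Proposition~\ref{vfrob}, $V(n,k,x,y)$ is isomorphic as an $\symm_{n-1}$-module to
\[
\mathrm{Ind}_{\symm_{n-x-y-1}\times\symm_{x}\times\symm_{y}}^{\symm_{n-1}}\bigl(S^{(n-x-y-k-1,\,k)}\otimes \mathrm{sign}_{\symm_x}\otimes \mathrm{sign}_{\symm_y}\bigr).
\]
Since induction from a Young subgroup of the form $\symm_a\times\symm_b\times\symm_c$ corresponds (via Frobenius) to the induction product $V\circ W\circ U$ of the three tensor factors, the identity~\eqref{indprod} gives
\[
\Frob V(n,k,x,y) \;=\; \Frob S^{(n-x-y-k-1,k)}\cdot\Frob(\mathrm{sign}_{\symm_x})\cdot \Frob(\mathrm{sign}_{\symm_y}).
\]
Now I would invoke the standard computations $\Frob S^\lambda = s_\lambda$ and $\Frob(\mathrm{sign}_{\symm_m}) = s_{(1^m)}$ to rewrite the right-hand side as $s_{(n-x-y-k-1,k)}\,s_{(1^x)}\,s_{(1^y)}$, which is the first claim.

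For the second claim, I would apply Proposition~\ref{fdrsum}, which expresses $(FDR_n)_{i,j}$ as the direct sum of those $V(n,k,x,y)$ with $k+x=i$ and $k+y=j$. Since Frobenius is additive on direct sums, summing the previous identity over this index set yields the displayed formula.

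There is really no obstacle here; the only thing to be careful about is making sure the Young subgroup in Proposition~\ref{vfrob} is literally the one whose induction is computed by the product of Frobenius images (so that the hypothesis of~\eqref{indprod} applies iteratively to a three-fold tensor product). This is immediate from the definition of induction product together with transitivity of induction, so the corollary follows.
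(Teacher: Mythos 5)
Your proposal is correct and follows exactly the paper's route: the paper's proof is a one-line citation of Proposition~\ref{fdrsum}, Proposition~\ref{vfrob}, and equation~\eqref{indprod}, and your write-up simply spells out those same steps (Frobenius of an induced product equals the product of Frobenius images, $\Frob S^\lambda = s_\lambda$, $\Frob(\mathrm{sign}_{\symm_m}) = s_{(1^m)}$, and additivity over the direct sum). No gaps to report.
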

\begin{proof}
This follows directly from Proposition~\ref{vfrob}, Proposition~\ref{fdrsum}, and equation~\ref{indprod}.
\end{proof}
\begin{corollary}
The bigraded Frobenius image of $\textrm{Res}^{\mathfrak{S}_n}_{\mathfrak{S}_{n-1}}(FDR_n)$ is given by
\[
\mathrm{grFrob} (\mathrm{Res}^{\mathfrak{S}_n}_{\mathfrak{S}_{n-1}}(FDR_n); q,t) = (1-qt)\prod_{i=1}^\infty \frac{(1+x_iqz)(1+x_itz)}{(1-x_iz)(1-x_iqtz)} \bigg|_{z^{n-1}}
\]
where the operator $(\cdots)\mid_{z^{n-1}}$ extracts the coefficient of $z^{n-1}$.
\end{corollary}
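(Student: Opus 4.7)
The plan is to derive the claimed identity directly from Corollary~\ref{frobimg} via the two-row Jacobi-Trudi formula and standard manipulations of Cauchy-kernel-type generating functions. Writing $s_{(1^x)} = e_x$ and summing Corollary~\ref{frobimg} against $q^i t^j$ over all bidegrees gives
$$\grFrob(\Res^{\symm_n}_{\symm_{n-1}} FDR_n; q, t) = \sum_{k, x, y \geq 0} q^{k+x} t^{k+y} \, s_{(n-x-y-k-1, \, k)} \, e_x \, e_y,$$
where the sum is restricted to triples $(k, x, y)$ for which $(n-x-y-k-1, k)$ is a valid partition.

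Next, I would expand each Schur factor using the two-row Jacobi-Trudi formula $s_{(a, k)} = h_a h_k - h_{a+1} h_{k-1}$ (with the convention $h_{-1} = 0$). The resulting expression splits into two sums; reindexing the second by $k \mapsto k + 1$ extracts a global factor of $qt$, so the two pieces combine as $(1 - qt)$ times the single sum
$$\sum_{a + k + x + y = n - 1} e_x \, e_y \, h_a \, h_k \, q^{k + x} t^{k + y}$$
over all nonnegative $(a, k, x, y)$.

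Finally, recognize this sum as the coefficient of $z^{n-1}$ in the Cauchy-like product
$$\prod_{i=1}^\infty \frac{(1 + x_i qz)(1 + x_i tz)}{(1 - x_i z)(1 - x_i qtz)}.$$
Indeed, using the standard identities $\prod_{i=1}^\infty (1 + x_i u) = \sum_k e_k u^k$ and $\prod_{i=1}^\infty (1 - x_i u)^{-1} = \sum_k h_k u^k$, this product expands as $\left(\sum_a e_a q^a z^a\right)\left(\sum_b e_b t^b z^b\right)\left(\sum_c h_c z^c\right)\left(\sum_d h_d (qt)^d z^d\right)$, whose $z^{n-1}$-coefficient $\sum_{a+b+c+d=n-1} e_a e_b h_c h_d q^{a+d} t^{b+d}$ matches the main sum under the relabeling $(a, b, c, d) = (x, y, a, k)$. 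Multiplying by $(1 - qt)$ then yields the claimed identity.

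The main obstacle will be bookkeeping during the Jacobi-Trudi expansion and subsequent reindexing: one must carefully handle the boundary cases where $(n-x-y-k-1, k)$ fails to be a valid partition, verifying that the formal Jacobi-Trudi contributions from such triples either vanish individually (as when $n - x - y - k - 1 = k - 1$, where the two rows of the determinant coincide) or pair up to cancel, so that extending the summation from valid partitions to all nonnegative tuples with $a + k + x + y = n - 1$ introduces no spurious terms. This edge-case analysis is what allows the clean consolidation into the $(1 - qt) [z^{n-1}]$ extraction on the right-hand side.
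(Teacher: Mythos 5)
You follow exactly the paper's route (expand Corollary~\ref{frobimg} by the two-row Jacobi--Trudi formula, reindex the second piece to extract $qt$, and identify the resulting unrestricted sum with the $z^{n-1}$-coefficient of the Cauchy-type product), and you correctly isolate the one delicate step: passing from the sum over triples $(k,x,y)$ with $2k+x+y\le n-1$ (equivalently $n-x-y-k-1\ge k$, forced because the block containing $n$ is nonempty) to the sum over all nonnegative tuples with $a+k+x+y=n-1$. But the cancellation you promise there does not happen, and this is a genuine gap. For a spurious tuple with $a=n-x-y-k-1\le k-2$, the formal Jacobi--Trudi expression is $h_ah_k-h_{a+1}h_{k-1}=-s_{(k-1,\,a+1)}$, carried with weight $q^{k+x}t^{k+y}$; the legitimate appearance of $s_{(k-1,a+1)}$ in the sum carries weight $q^{a+1+x}t^{a+1+y}$, and since $a+1<k$ these monomials differ, so nothing cancels these terms (only the boundary case $a=k-1$ vanishes, as you observe). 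The reindexing $k\mapsto k+1$ likewise changes the admissible range of the complementary $h$-index and leaves uncancelled boundary terms.

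That this cannot be repaired by more careful bookkeeping is visible already at $n=2$: the right-hand side of the stated identity is $(1-qt)(1+q+t+qt)s_{(1)}=(1+q+t)s_{(1)}-(q^2t+qt^2+q^2t^2)s_{(1)}$, which has negative coefficients, whereas $\Res^{\symm_2}_{\symm_1}FDR_2$ has basis $\{1,\theta_1,\xi_1'\}$ in $\wedge\{\theta_1,\xi_1'\}/\langle\theta_1\xi_1'\rangle$ and bigraded Frobenius image $(1+q+t)s_{(1)}$. So the restricted and unrestricted sums genuinely differ, by exactly the terms you hoped would cancel. You are in good company: the paper's own two displayed generating-function identities make the same silent extension of the summation range, so your proposal faithfully reproduces its argument; but the edge-case verification you flag as the ``main obstacle'' in fact fails, and a correct closed form must either keep the restriction $a\ge k$ (for instance by summing against the two-variable specialization $s_{(a,k)}(z,qtz)=(qt)^kz^{a+k}\,\frac{1-(qt)^{a-k+1}}{1-qt}$ via the Cauchy identity, which shows the stated product overshoots by the terms $\sum_{a\ge k}s_{(a,k)}(qt)^{a+1}z^{a+k}$ multiplied by the elementary-symmetric generating functions in $qz$ and $tz$) or else carry a compensating correction to the product formula.
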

By Proposition~\ref{frobimg} we have
\begin{equation}
\textrm{grFrob} (\textrm{Res}^{\mathfrak{S}_n}_{\mathfrak{S}_{n-1}}(FDR_n); q,t) = \sum_{i} \sum_j \sum_{\substack{k,x,y \\ k+x = i \\ k+y = j}} s_{(n-x-y-k-1, k)}s_{(1^x)}s_{(1^y)} q^i t^j.
\end{equation}
Applying Jacobi-Trudi \cite{Sagan} to the $s_{(n-x-y-k-1, k)}$ terms on the right gives
\begin{equation}
\sum_{i} \sum_j \sum_{\substack{k,x,y \\ k+x = i \\ k+y= j}} s_{(n-x-y-k-1, k)}s_{(1^x)}s_{(1^y)} q^i t^j  = \sum_{i} \sum_j \sum_{\substack{k,x,y \\ k+x = i \\ k+y= j}} (h_{n-x-y-k-1}h_{k}- h_{n-x-y-k}h_{k-1})e_{x}e_{y} q^i t^j 
\end{equation}
and reindexing sums gives
\begin{equation}
\sum_{i} \sum_j \sum_{\substack{k,x,y \\ k+x = i \\ k+y= j}} h_{n-x-y-k-1}h_{k}e_{x}e_{y} q^i t^j = \sum_{k} h_kq^kt^kz^k\sum_{x}e_xq^xz^x \sum_{y}e_yq^yz^y \sum_m h_mz^m \bigg|_{z^{n-1}}
\end{equation}
and
\begin{equation}
\sum_{i} \sum_j \sum_{\substack{k,x,y \\ k+x = i \\ k+y= j}} h_{n-x-y-k}h_{k-1}e_{x}e_{y} q^i t^j = \sum_{k} h_kq^{k+1}t^{k+1}z^k\sum_{x}e_xq^xz^x \sum_{y}e_yq^yz^y \sum_m h_mz^m \bigg|_{z^{n-1}}
\end{equation}
from which the result follows.

\section{Maximal bidegrees, cyclic sieving and further directions}

Let $X_n$ denote the subset of $\Phi(n)$ corresponding to bidegrees $(i,j)$ where $i+j= n-1$, in other words,
\begin{equation}
X_n =  \bigcup_{2k+x+y = n-1} \Phi(n,k,x,y).
\end{equation}
This set consists of noncrossing set partitions set partitions of $[n]$ in which $n$ is in a block by itself, all other blocks are size 1 or  2, and singleton blocks other than $n$ are labelled by $\theta$ or $\xi'$. The set $\{G_\pi \mid \pi \in X_n\}$ is invariant (up to sign changes) under the action of the cycle $(1,2, \dots, n-1)$, since $n$ is in a block by itself and rotating all elements except $n$ cannot introduce any new crossings. We therefore have the setup for a cyclic sieving result using Springer's theorem of regular elements (Theorem~\ref{fakedegree}).
\begin{theorem}
\label{sieving}
The triple $(X_n, C_{n-1}, q^{\binom{n}{2}}\mathbf{fd}(FDR_n)_{i+j = n-1})$ exhibits the cyclic sieving phenomenon where $C_{n-1}$ is the cyclic group generated by $(1,2,\dots, n-1)$.
\end{theorem}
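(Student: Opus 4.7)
The plan is to apply Theorem~\ref{fakedegree} to a sign-twist of the $\symm_{n-1}$-module $V := (FDR_n)_{i+j=n-1}$. By Theorem~\ref{basis} together with the bidegree formula $\deg G_\pi = (k+x, k+y)$ for $\pi \in \Phi(n,k,x,y)$, the condition $i+j = n-1$ forces the block containing $n$ to be the singleton $\{n\}$, so $V$ has basis $\{[G_\pi] : \pi \in X_n\}$. The long cycle $c := (1, 2, \dots, n-1)$ preserves $X_n$: since $\{n\}$ is always an isolated block, cyclically rotating $1, 2, \dots, n-1$ around the circle cannot introduce a crossing. However, the equivariance relation $\sigma \cdot G_\pi = \mathrm{sign}(\sigma)\, G_{\sigma \cdot \pi}$ established in Section~\ref{basis-section} shows that $c$ permutes this basis only up to the uniform sign $(-1)^{n-2}$, which is incompatible with the hypothesis of Theorem~\ref{fakedegree}.

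To remedy this, I would pass to the sign-twisted module $V \otimes \mathrm{sign}$, where $\mathrm{sign}$ denotes the sign representation of $\symm_{n-1}$. The basis $\{[G_\pi] \otimes 1 : \pi \in X_n\}$ is then genuinely permuted by $c$, and Theorem~\ref{fakedegree} yields that $(X_n, \langle c \rangle, \mathbf{fd}(V \otimes \mathrm{sign}))$ exhibits the cyclic sieving phenomenon.

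The remaining task is to replace $\mathbf{fd}(V \otimes \mathrm{sign})$ with the stated polynomial $q^{\binom{n}{2}} \mathbf{fd}(V)$. Since two polynomials that agree at every $(n-1)$-th root of unity $\zeta^d$ are interchangeable as cyclic sieving polynomials, it suffices to verify that $\mathbf{fd}(V \otimes \mathrm{sign})(\zeta^d) = \zeta^{d\binom{n}{2}} \mathbf{fd}(V)(\zeta^d)$ for every $d$. Using Springer's general character formula $\chi_U(c^d) = \mathbf{fd}(U)(\zeta^d)$ (valid for any representation $U$ of $\symm_{n-1}$, and the content underlying Theorem~\ref{fakedegree}) together with $\chi_{V \otimes \mathrm{sign}}(c^d) = \mathrm{sign}(c^d)\, \chi_V(c^d) = (-1)^{d(n-2)} \chi_V(c^d)$, this reduces to the cyclotomic identity $\zeta^{\binom{n}{2}} = (-1)^{n-2}$. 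The identity follows from a short parity check: if $n$ is even then $\binom{n}{2} = (n/2)(n-1) \equiv 0 \pmod{n-1}$ and $(-1)^{n-2} = 1$, while if $n$ is odd then $\binom{n}{2} \equiv (n-1)/2 \pmod{n-1}$ and $\zeta^{(n-1)/2} = -1 = (-1)^{n-2}$.

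The main obstacle is this sign mismatch: the basis $\{G_\pi\}$ is only permuted up to a global sign by $c$, so Theorem~\ref{fakedegree} cannot be applied directly. Sign-twisting handles the mismatch cleanly, and the cyclotomic identity $\zeta^{\binom{n}{2}} = (-1)^{n-2}$ is exactly what converts the resulting sign twist into the advertised prefactor $q^{\binom{n}{2}}$.
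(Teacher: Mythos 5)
Your proof is correct and takes the same route as the paper, whose proof of Theorem~\ref{sieving} is literally the single line ``this follows directly from Theorem~\ref{fakedegree}''; the point you work out in detail is exactly what that one-liner glosses over, namely that the long cycle only permutes the basis $\{[G_\pi] : \pi \in X_n\}$ up to the sign $(-1)^{n-2}$ (nontrivial precisely when $n$ is odd, as the paper itself remarks afterwards) and that the prefactor $q^{\binom{n}{2}}$ compensates for this at $(n-1)$-th roots of unity. Your sign-twist of the module, the use of Springer's character evaluation $\chi_U(c^d)=\mathbf{fd}(U)(\zeta^d)$, and the verification $\zeta^{\binom{n}{2}}=(-1)^{n-2}$ are all correct, so your argument is, if anything, a more complete justification than the one printed in the paper.
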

\begin{proof}
This follows directly from Theorem~\ref{fakedegree}.
\end{proof}
Thiel \cite{Thiel} studied a version of this cyclic action in which rotation does not introduce a sign change, while in our setup it introduces a sign when $n$ is odd. Thiel proved the following cyclic sieving.
\begin{theorem}[Thiel, 2016]
\label{thielsieving}
The triple $(X_n, C_{n-1}, C_n(q))$ exhibits the cyclic sieving phenomenon, where $C_{n-1}$ is the cyclic group generated by $(1,2,\dots, n-1)$ and $C_n(q)$ is the Mac-Mahon $q$-Catalan number, defined by
\[
C_n(q) := \frac{1}{[n+1]_q} \begin{bmatrix}2n\\q \end{bmatrix}_q.
\]
\end{theorem}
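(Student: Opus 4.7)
The plan is to apply Springer's theorem (Theorem~\ref{fakedegree}) to the top bidegree subspace $V := \bigoplus_{i+j = n-1} (FDR_n)_{i,j}$, viewed as an $\mathfrak{S}_{n-1}$-module, while carefully tracking a global sign coming from the way the long cycle acts on the combinatorial basis. By Theorem~\ref{basis} the set $\{[G_\pi] : \pi \in X_n\}$ is a basis of $V$. The cycle $c = (1,2,\dots,n-1)$ maps $X_n$ to itself because $n$ is isolated and rotating the remaining points preserves the noncrossing condition, but by the identity $\sigma \circ G_\pi = \mathrm{sign}(\sigma)\, G_{\sigma \circ \pi}$ it permutes the basis only up to the sign $\mathrm{sign}(c) = (-1)^{n-2} = (-1)^n$, so
\begin{equation*}
c \cdot G_\pi = (-1)^{n}\, G_{c \cdot \pi}.
\end{equation*}

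From this I would pass directly to a character calculation. Iterating gives $c^d \cdot G_\pi = (-1)^{nd} G_{c^d \cdot \pi}$, so each $\pi \in X_n$ fixed by $c^d$ contributes $(-1)^{nd}$ to the trace while the remaining basis vectors contribute $0$; hence
\begin{equation*}
\mathrm{trace}(c^d, V) = (-1)^{nd}\, |X_n^{c^d}|.
\end{equation*}
Since $c$ is a regular element of $\mathfrak{S}_{n-1}$ of order $n-1$, the Springer character identity underlying Theorem~\ref{fakedegree} gives $\mathrm{trace}(c^d, V) = \mathbf{fd}(V)(\zeta^d)$, where $\zeta = e^{2\pi i/(n-1)}$. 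Combining these two equations,
\begin{equation*}
|X_n^{c^d}| = (-1)^{nd}\, \mathbf{fd}(V)(\zeta^d).
\end{equation*}

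The final step is to absorb the sign into the monomial prefactor, via
\begin{equation*}
\zeta^{\binom{n}{2}} = e^{2\pi i \binom{n}{2}/(n-1)} = e^{i \pi n} = (-1)^{n},
\end{equation*}
which gives $(-1)^{nd} = \zeta^{d\binom{n}{2}}$ and hence $|X_n^{c^d}| = \bigl(q^{\binom{n}{2}}\mathbf{fd}(V)\bigr)\big|_{q = \zeta^d}$, exactly the desired cyclic sieving statement. The main obstacle is the sign itself: Theorem~\ref{fakedegree} as stated applies to a basis genuinely permuted by $c$, whereas ours is permuted only up to a global sign when $n$ is odd, so that theorem cannot be invoked verbatim as a black box. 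The trace computation above is the workaround, and the identity $\zeta^{\binom{n}{2}} = (-1)^n$ is precisely what makes the prefactor $q^{\binom{n}{2}}$ the correct sieving twist.
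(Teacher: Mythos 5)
Your sign analysis is correct and worthwhile as far as it goes: the identity $c \cdot G_\pi = (-1)^n G_{c\cdot \pi}$, the trace computation, and the observation that $\zeta^{\binom{n}{2}} = e^{\pi i n} = (-1)^n$ together give a careful proof of Theorem~\ref{sieving}, i.e.\ of the cyclic sieving phenomenon for the triple $\bigl(X_n, C_{n-1}, q^{\binom{n}{2}}\mathbf{fd}((FDR_n)_{i+j=n-1})\bigr)$, and in fact this is more detailed about the sign issue than the paper's one-line appeal to Theorem~\ref{fakedegree}. But that is not the statement you were asked to prove. Theorem~\ref{thielsieving} asserts the CSP with the MacMahon $q$-Catalan number $C_n(q)$ as the sieving polynomial, and a CSP statement is about the values of that specific polynomial at the $(n-1)$-st roots of unity. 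Your argument ends with $|X_n^{c^d}| = \bigl(q^{\binom{n}{2}}\mathbf{fd}(V)\bigr)\big|_{q=\zeta^d}$ and never connects this to $C_n(\zeta^d)$. To conclude Thiel's theorem you would need the congruence $C_n(q) \equiv q^{\binom{n}{2}}\mathbf{fd}((FDR_n)_{i+j=n-1}) \pmod{q^{n-1}-1}$, and that is exactly the step the paper identifies as missing: Section~6 poses it as an open Problem and states explicitly that the author was unable to establish it directly, so it cannot be waved through. (Deducing the congruence from Thiel's theorem itself would of course be circular here.)

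For context, the paper does not prove Theorem~\ref{thielsieving} at all; it is quoted from Thiel, whose proof is purely combinatorial, by evaluating $C_n(q)$ at roots of unity and directly enumerating the rotationally symmetric elements of $X_n$, with no representation theory. Your proposal is precisely the ``alternative representation theoretic proof'' the paper says would exist \emph{if} the polynomial equivalence problem were solved. So as written there is a genuine gap: either supply a direct computational proof that $q^{\binom{n}{2}}\mathbf{fd}((FDR_n)_{i+j=n-1})$ and $C_n(q)$ agree modulo $q^{n-1}-1$ (e.g.\ starting from the explicit fake degree formula $\sum_{2k+x+y=n-1} \genfrac{[}{]}{0pt}{}{n-1}{2k,x,y}_q C_k(q)\, q^{k+\binom{x}{2}+\binom{y}{2}}$ given in the paper), or abandon the algebraic route and reproduce Thiel's fixed-point count.
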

Thiel proved his result via direct computation of $C_n(q)$ and enumeration of fixed points instead of using representation theory, so one might wonder if our basis could give an altenate algebraic proof of his result. The expression for Frobenius image given in Corollary~\ref{frobimg} allows for the computation of the fake degree as
\begin{equation}
\textbf{fd}((FDR_n)_{i+j = n-1}) = \sum_{\substack{k,x,y\\2k+x+y = n-1}} \begin{bmatrix}{n-1}\\{2k,x,y}\end{bmatrix}_q C_k(q) q^{k+\binom{x}{2}+\binom{y}{2}} 
\end{equation}
Combining the two cyclic sieving results it must follow that $q^{\binom{n}{2}}\textbf{fd}((FDR_n)_{i+j = n-1})$ is equivalent to $C_n(q)$ modulo $q^{n-1} - 1$. We have had difficulty in determining this equivalence directly, however, so we propose the following problem:
\begin{problem}
Is there a direct computational proof that $q^{\binom{n}{2}}\textbf{fd}((FDR_n)_{i+j=n-1})$ and $C_n(q)$ are equivalent modulo $q^n-1$?
\end{problem}
Such a proof would complete an alternative representation theoretic proof of Thiel's result.

In joint work with Rhoades \cite{me} we developed a similar combinatorial model for the maximal bidegree components of $FDR_n$, with a basis indexed by all noncrossing set partitions. The action of $S_n$ on that basis could be understood in terms of skein-like relations described by Rhoades \cite{Rhoades}. Patrias, Pechenik, and Striker \cite{PPS} independently discovered an alternate algebraic/geometric model for the irreducible submodule of this action generated by singleton-free noncrossing set partitions sitting in the coordinate ring of a certain algebraic variety. They associated to each partition a polynomial in this coordinate ring defined in terms of matrix minors, and showed that these polynomials satisfied the skein relations described in \cite{Rhoades}. This suggests the following problem:
\begin{problem}
Can our basis for $S^{(n-k-1, k)}$ be realized as a set of polynomials, similarly to the methods of Patrias, Pechenik, and Striker \cite{PPS}?
\end{problem}
One reason for thinking an analogous model might exist is that the relation of block operators described in Lemma~\ref{skein} also appears in the maximal bidegree model and corresponds to a certain identity of two-by-two matrix minors in the work of Patrias, Pechenik and Striker. Their construction therefore extends to give a model for the submodule generated by partitions in $\Phi(n)$ for which the block containing $n$ is at most size two, but we have as yet been unable to discover a treatment of larger blocks satisfying our other relations.
\section{Acknowledgements}
We are very grateful to Brendon Rhoades for many helpful discussions and comments on this project.
  

\begin{thebibliography}{99}
 
    \bibitem{Bergeron} F. Bergeron.  
   The bosonic-fermionic diagonal coinvariant modules conjecture.
  Preprint, 2020.
 {\tt arXiv:2005.00924}.
 
 \bibitem{BRT} S. Billey, B. Rhoades, and V. Tewari.
Boolean product polynomials, Schur positivity, and Chern plethysm.
{\em Int. Math. Res. Not. IMRN}, (2019), rnz261,
 {\tt https://doi.org/10.1093/imrn/rnz261}

 
  \bibitem{DIW} M. D'Adderio, A. Iraci, and A. Wyngaerd.
Theta operators, refined Delta conjectures, and coinvariants.
{\em Adv. Math.}, {\bf 376}, (2021), 107477.
 
\bibitem{Haiman} M. Haiman. 
Conjectures on the quotient ring by diagonal invariants.
{\em J. Algebraic Combin.}
 {\bf 3}, (1994), 17--76.

\bibitem{KR} Jongwon Kim and B. Rhoades. Lefschetz theory for exterior algebras and fermionic diagonal
coinvariants.
{\em Int. Math. Res. Not. IMRN}, (2020), rnaa203,
{\tt  https://doi.org/10.1093/imrn/rnaa203 }.

 \bibitem{me} Jesse Kim and B. Rhoades.
Set partitions, fermions, and skein relations.
Preprint, 2021.
{\tt arXiv:2109.06373}

\bibitem{OZ} R. Orellana and M. Zabrocki. A combinatorial model for the decomposition of multivariate
polynomial rings as an $S_n$-module.
{\em Electron. J. Combin.}, {\bf 27 (3)}, (2020), P3.24.

\bibitem{PPS} R. Patrias, O. Pechenik, and  J. Striker. 
A web basis of invariant polynomials from noncrossing partitions. 
Preprint, 2021. {\tt arXiv:2112.05781}

\bibitem{PRR} B. Pawlowski, E. Ramos, and B. Rhoades.
Spanning line configurations and representation stability.
Preprint, 2019. {\tt arXiv:1907.07268}.

\bibitem{RSW} V. Reiner, D. Stanton, and D. White.
The cyclic sieving phenomenon.
{\em J. Combin. Theory Ser. A}, {\bf 108}, (2004), 17--50.

\bibitem{Rhoades} B. Rhoades. A skein action of the symmetric group on noncrossing partitions.
{\em J. Algebraic Combin.}, {\bf 45 (1)}, (2017), 81--127.

\bibitem{RW} B. Rhoades and A. T. Wilson. 
Vandermondes in superspace.
{\em Trans. Amer. Math. Soc.}, {\bf 373 (6)}, (2020), 4483--4516.

\bibitem{RW2} B. Rhoades and A. T. Wilson.
Set superpartitions and superspace duality modules.
Preprint, 2021.
{\tt arXiv:2104.05630}.

\bibitem{Sagan} B. Sagan. {\em The Symmetric Group,} Springer, New York, 2001

\bibitem{Springer} T. Springer. Regular elements of finite reflection groups. {\em Inventiones math. 25, 159-198}, 1974.

\bibitem{Swanson} J. Swanson. Tanisaki witness relations for harmonic differential forms.
In preparation, 2021.

\bibitem{SW} J. Swanson and N. Wallach. Harmonic differential forms for pseudo-reflection groups I.
Semi-invariants. To appear,
{\em J. Combin. Theory Ser. A}, 2021.
{\tt arXiv:2001.06076}.

\bibitem{Thiel} M. Thiel.
A new cyclic sieving phenomenon for Catalan objects.
{\em Discrete Math.}, 
{\bf 340 (3)}, (2017), 426--429.
  
\bibitem{Westbury} B. Westbury.
Invariant tensors and the cyclic sieving phenomenon
{\em The Electronic Journal of Combinatorics}
{\bf 22}, (2015).
{\tt arXiv:0912.1512}

\bibitem{ZabrockiDelta} M. Zabrocki. A module for the Delta conjecture.
Preprint, 2019. 
{\tt arXiv:1902.08966}.


\bibitem{ZabrockiFermion} M. Zabrocki. Coinvariants and harmonics.  Blog for
Open Problems in Algebraic Combinatorics 2020.
{\tt https://realopacblog.wordpress.com/2020/01/26/coinvariants-and-harmonics/}.


\end{thebibliography}
\end{document}